\newtheorem{theorem}{Theorem}[section]
\newtheorem{lemma}[theorem]{Lemma}
\theoremstyle{definition}
\theoremstyle{remark}
\newtheorem{remark}[theorem]{Remark}
\numberwithin{equation}{section}
\renewcommand{\theequation}{\thesection.\arabic{equation}}
\def\@eqnnum{{\reset@font\rm (\theequation)}}
\def\a{\alpha}
\renewcommand\O{\Omega}
\newcommand{\btau}{\mbox{\boldmath$\tau$}}
\def\bn{{\bf n}}
\def\cE{{\mathcal E}}
\def\cI{{\mathcal I}}
\def\cN{{\mathcal N}}
\def\cR{{\mathcal R}}
\def\cT{{\mathcal T}}
\def\cU{{\mathcal U}}
\def\f12{\frac12}
\def\dfrac{\displaystyle\frac}
\newcommand{\grad}{\nabla}
\newcommand{\gradt}{\nabla\cdot}
\newcommand{\tri}{|\!|\!|}
\newcommand{\bdm}{\begin{displaymath}}
\newcommand{\edm}{\end{displaymath}}
\newcommand{\beq}{\begin{equation}}
\newcommand{\eeq}{\end{equation}}
\newcommand{\beqa}{\begin{eqnarray}}
\newcommand{\eeqa}{\end{eqnarray}}
\newcommand{\beqas}{\begin{eqnarray*}}
\newcommand{\eeqas}{\end{eqnarray*}}
\begin{document}
%\input defs.tex

% \title[short text for running head]{full title}
\title[Residual-based a Posteriori Estimate for Nonconforming Elements]
{Residual-based a Posteriori Error Estimate  for Interface Problems: \\ Nonconforming Linear Elements}

%    Only \author and \address are required; other information is
%    optional.  Remove any unused author tags.

%    author one information
% \author[short version for running head]{name for top of paper}
\author{Zhiqiang Cai}
\address{Department of Mathematics, Purdue University \\
	      150 N. University Street \\ 
	      West Lafayette, IN 47907-2067}
\email{caiz@purdue.edu}
\thanks{This work was supported in part by the National Science Foundation
under grants DMS-1217081 and DMS-1522707, the Purdue Research Foundation,
and the Research Grants Council of the Hong Kong
SAR, China under the GRF Project No. 11303914, CityU 9042090.}

%    author two information
\author{Cuiyu He}
\address{Department of Mathematics, Purdue University \\ 
	      150 N. University Street \\
	      West Lafayette, IN 47907-2067}
\email{he75@purdue.edu}
\thanks{}

%    author three information
\author{Shun Zhang}
\address{Department of Mathematics \\
	       City University of Hong Kong\\ Hong Kong}
\email{shun.zhang@cityu.edu.hk}
\thanks{}

%    \subjclass is required.
\subjclass[2010]{Primary 65N30}

\date{}                %	to be filled by AMS staff

%\dedicatory{}   %	not required for journals

%    Abstract is required.
\begin{abstract}
In this paper, we study a modified residual-based a posteriori error estimator for the 
nonconforming linear finite element approximation to the interface problem. 
The reliability of the estimator is analyzed by 
a new and direct approach without using the Helmholtz decomposition.
It is proved that the estimator is reliable with constant independent of the jump of
diffusion coefficients across
the interfaces, without the assumption that the diffusion coefficient is quasi-monotone. 
Numerical results for one test problem with intersecting interfaces are also presented.
\end{abstract}

\maketitle

\section{Introduction} \label{intro}
\setcounter{equation}{0}

During the past decade, the construction, analysis, and implementation of robust a posteriori 
error estimators for various finite element approximations to partial differential equations with 
parameters have been one of the focuses of research in the field of the a posteriori error 
estimation. For the elliptic interface problem, various robust estimators have been constructed, 
analyzed, and implemented (see, e.g., \cite{BeVe:2000, Pe:2002, LuWo:04, CaZh:09, CaZh:2010, CaZh:10c,Voh:11, CaZh:11} for  conforming elements,
\cite{Ai:2005, Kim:07, CaZh:10a} for nonconforming elements, 
\cite{ CaZh:10a} for mixed elements, and
\cite{CaYeZh:2011} for discontinuous elements).
The robustness for residual based estimators in the reliability bound is established theoretically under the assumption of the 
quasi-monotone distribution of the diffusion coefficients,
see \cite{BeVe:2000} for more details. However, numerical results by many researchers including ours strongly suggest that those estimators are robust 
even when the diffusion coefficients are not 
quasi-monotone. 
In this paper, we provide a theoretical evidence for the nonconforming linear element
without the quasi-monotone assumption.

One of the key steps in obtaining the robust reliability bound of classical residual 
based estimator is to construct a modified Cl\'ement-type 
interpolation operator satisfying specific approximation and stability properties 
in the energy norm (see  \cite{BeVe:2000} for details). For the conforming linear element, the degrees of freedom are the nodal values at vertices of triangles. 
The nodal value of the modified Cl\'ement-type interpolation is defined by the average value 
of the function over connected elements whose corresponding diffusion coefficients are 
the greatest. Under the quasi-monotone assumption, Bernardi and Verf\"urth \cite{BeVe:2000} 
were able to establish the required properties of the interpolation operator to guarantee the robust reliability bound. 
A key advantage for the nonconforming linear element is 
that its degrees of freedom are nodal values at the middle points of edges
of triangles and that each middle point is shared by at most two triangles. Hence, we are able to construct a
modified Cl\'ement-type interpolation satisfying the desired properties without the quasi-monotonicity
assumption (see Section~4). 

The a posteriori error estimation 
for the nonconforming elements has been studied by many researchers. Due to the lack of the error equation, 
Dari, Duran, Padra, and Vampa \cite{DaDuPaVa:1996} established the reliability bound of 
the residual-based error estimator for the Poisson equation through the Helmholtz decomposition
of the true error. Their analysis is widely used by other researchers 
(see, e.g.,  \cite{DaDuPaVa:1995, CaBaJa:2002,Ai:2005,CaHuOr:2007}),
and the Helmholtz decomposition becomes a necessary tool for obtaining 
the reliability bound for the nonconforming elements. 
This approach has also been applied to the mixed finite
element method \cite{LoSt:2006} and discontinuous Garlerkin finite element method \cite{BeHaLa:2003, Ai:2007,CaYeZh:2011}.
It is obvious that application of their analysis to the interface problem will lead to the same distribution assumption as the conforming elements in \cite{BeVe:2000}.

Ainsworth \cite{Ai:2005} constructed  an equilibrated estimator 
without using the Cl\'ement type interpolation but the error bounds 
depend on the jump of diffusion constants.
Despite the main trend of using Helmholtz decomposition in the nonconforming finite element analysis,
there are several other interesting papers that approached differently. 
Hoppe and Wohlmuth \cite{HoWo:1996} constructed two a posteriori error estimators by using the hierarchical basis under the saturation assumption.
Schieweck \cite{Sc:2002} constructed a 
two-sided bound of the energy error using the analysis of conforming case with some simple additional arguments. 
Nevertheless, conforming Cl\'ement type interpolation was applied in that paper hence again impose the 
assumption of quasi-monotonicity.      

The purpose of this paper is to present a new and direct analysis,
which does not involve the Helmholtz decomposition, 
for estimating the reliability bound with the aim of removing the 
quasi-monotone assumption. 
To do so, our analysis makes use of (a) our newly developed error 
equation for the nonconforming finite element approximation in \cite{CaYeZh:2011}
and (b) the structure of the nonconforming elements. 
Combining with our observation on the 
modified Cl\'ement-type interpolation for the nonconforming elements, 
we are able to bound
both the element residuals and the numerical flux jumps uniformly without 
the quasi-monotonicity assumption. 
Unfortunately, we are unable to do the same for the numerical edge solution jump. 
As an alternative, we modify the edge solution jump at elements where
the quasi-monotonicity assumption is not satisfied. The modified estimator is proved to be reliable 
with constant independent of the jump of the diffusion coefficients across interfaces without the quasi-monotonicity
assumption. 
By using the standard argument (see, e.g.,  \cite{BeVe:2000}),
we also establish local efficiency bounds 
uniformly with respect to the jump of the diffusion coefficient.
This robustness is obtained for the standard (modified) indicators
without (with) the quasi-monotonicity assumption.
Nevertheless, 
numerical results presented in Section 7 for a benchmark test problem seems
to suggest that the modified indicator 
generates a better mesh than the standard indicator.

Existing residual based estimators consist of the element residual, the edge flux jump, and the 
edge tangential derivative jump due to the Helmholtz decomposition. As a by-product of 
our direct approach (see (\ref{error})), the residual based estimators to be studied in this paper replace 
the edge tangential derivative jump by the edge solution jump. 
Even though they are equivalent
in two dimensions, numerical result shows that our estimator is more accurate than
the existing estimators (see Figure 6).

The outline of the paper is as follows. The interface problem and its nonconforming finite element approximation are introduced in Section 2 as well as the $L^2$ representation
of the true error in the (broken) energy norm. The ``standard'' and modified indicators and estimators 
are presented
in Section 3. The modified Cl\'ement-type interpolation operator is defined and its approximation
properties are proved in Section~4. Robust local efficiency and global reliability bounds
are established in Sections~5 and 6, respectively. Finally, we provide some numerical results
in Section~7.

\section{Nonconforming Linear Finite Element Approximation to Interface Problem}
\setcounter{equation}{0}

\subsection{Interface Problem}
For simplicity of the presentation, we consider only two dimensions.
Extension of the results in this paper to three dimensions is straightforward.
Let $\Omega$ be a bounded, open, connected subset of $\Re^2$  
with a Lipschitz continuous boundary $\partial\Omega$.
Denote by $\bn=(n_1,\,n_2)^t$ the outward unit vector normal to
the boundary. We partition the boundary of the domain $\Omega$ into
two open subsets $\Gamma_D$ and $\Gamma_N$ such that
$\partial\Omega=\bar{\Gamma}_D\cup\bar{\Gamma}_N$ and that $\Gamma_D\cap
\Gamma_N=\emptyset$. For simplicity, we assume that $\Gamma_D$ is
not empty (i.e., $\mbox{mes}\,(\Gamma_D)\not= 0$). 
Consider the following elliptic interface problem
 \begin{equation}\label{problem}
 -\nabla\cdot \,(\alpha(x)\nabla\, u) =f
 \quad \mbox{in} \,\,\Omega
 \end{equation}
with boundary conditions
 \beq\label{bc1}
 u =g_{_D}\quad \mbox{on } \,\Gamma_D
 \quad\mbox{and}\quad
 \bn \cdot \left(\alpha \nabla\, u\right) =g_{_N}\quad \mbox{on } \,\Gamma_N,
 \eeq
where $\nabla \cdot$ and $\nabla$ are the divergence and gradient operators, respectively;
$f$, $g_{_D}$, and $g_{_N}$ are given scalar-valued functions; and
the diffusion coefficient $\alpha>0$ is piecewise constant with respect to a partition of the 
domain $\bar \O=\bigcup\limits_{i=1}^n \bar \O_i$. Here the subdomain
$\O_i$ is open and polygonal.
The jump of the $\alpha$ across interfaces (subdomain boundaries) are possibly very large. 
For simplicity, assume that $f$, $g_{_D}$, and $g_{_N}$ are piecewise linear functions.

We use the standard notations and definitions for the Sobolev spaces $H^s(\O)$ and
$H^s(\partial\O)$ for $s \ge 0$. The standard associated inner products are denoted by
$(\cdot,\,\cdot)_{s,\O}$ and $(\cdot,\,\cdot)_{s,\partial\O}$,
and their respective norms are denoted by $\|\cdot\|_{s,\O}$
and $\|\cdot\|_{s,\partial\O}$. (We omit the subscript $\O$ from the inner product and norm designation
when there is no risk of confusion.) For $s = 0$, $H^s(\O)$ coincides with $L^2(\O)$. In this
case, the inner product and norm will be denoted by $\|\cdot\|$ and $(\cdot,\,\cdot)$,
respectively. Let 
\[
H^1_{g,D}(\O) := \{v \in H^1(\, \O \,) \, : \, v=g_{_D} \, \, \mbox{on} \,\, \Gamma_D \}.
\]
The corresponding variational formulation of problem $(\ref{problem})$-($\ref{bc1}$) is to find 
$u \in H^1_{g,D}(\O)$ such that
\begin{eqnarray}\label{garlerkin}
a(u,v)=f(v), \quad \forall \, v\in H^1_{0,D}(\O),
\end{eqnarray}
where the bilinear and linear forms are defined by 
\[
a(u, v)=(\alpha(x)\nabla u, \nabla v)_\O  \quad \mbox{and} \quad f(v)=(f, v)_\O+(g_{_N},v)_{\Gamma_N}.
\]

\subsection{Nonconforming Linear Finite Element Approximation}
Let $\cT^h$ be a triangulation of the domain $\O$. Assume that $\cT^h$ is regular; i.e., for all $K \in \cT^h$,
 there exist a positive constant $\kappa$ such that
 \[
 h_K \le \kappa \, \rho_K,
 \]
 where $h_K$ denotes the diameter of the element $K$ and $\rho_K$ the diameter of the largest circle that
 may be inscribed in $K$. Note that the assumption of the mesh regularity does not exclude highly, 
locally refined meshes. Let 
\[
 \cN^{h}=\cN^{h}_I\cup \cN^{h}_D\cup \cN^{h}_N
 \quad\mbox{and}\quad
  \cE^{h}=\cE^{h}_I\cup \cE^{h}_D\cup \cE^{h}_N,
 \]
where $\cN^{h}_I$ ($\cE^h_I$) is the set of all interior vertices (edges) in 
$ \cT^{h}$, and $\cN^{h}_D$ ($\cE^h_D$) and $\cN^{h}_N$ ($\cE^h_N$) are the respective sets of 
all vertices (edges) on $\overline\Gamma_D$ and $\Gamma_N$. For each $e\in \cE^{h}$, 
denote by $m_e$ the mid-point of the edge $e$. 
Furthermore, assume that interfaces
\[
F=\{ \partial \O_i \cup \partial \O_j \, : \, i,j=1, \cdots n\}
\]
do not cut through any element $K \in \cT^h$.

Let $P_k(K)$ be the space of polynomials of degree less than or equal to 
$k$ on the element $K$. 
Denote the conforming piecewise linear 
finite element space associated with the triangulation $\cT^h$ by 
\[
\cU^{c}=\{ v\in H^1(\O) :\, v|_K \in P_1(K), \quad \forall \, K \in \cT^h \}
\]
and its subset by
\[
 \cU^{c}_{g,D}=\{ v \in \cU^{c} : v|_{\Gamma_D}=g_{_D}\}.
\]
Denote the nonconforming piecewise linear 
finite element space, i.e., the Crouzeix-Raviart element \cite{GiRa:86}, associated with the triangulation $\cT^h$ by 
\[
\cU^{nc}=\{ v \in L^2(\O) : v|_K \in P_1(K), \quad \forall \, K\in \cT^h, \mbox{ and } v \mbox{ is continuous at }
 m_e, \forall e \in \cE_I^h\}
\]
and its subset by
\[
\cU_{g,D}^{nc}=\{ v \in \cU^{nc} \,:\, v(m_e)=g_{_D}(m_e), \quad \forall \,e \in \cE_D^h\}.
\]

Let 
 \[
 H^1(\cT^h)=\{ v \in L^2(\O) : v|_K \in H^1(K), \quad \forall \, K \in \cT^h\}.
 \]
For any $v,\,w\in H^1(\cT^h)$, denote the (broken) bilinear form by
  \[
a_h(v,\,w)
=\sum_{K \in \cT^h} ( \alpha \nabla v,\, \nabla w)_K
\]
and the (broken) energy norm by
 \[
  \tri v \tri_\O=\sqrt{a_h(v,\,v)}
=\left( \sum_{K \in \cT^h} \|\alpha^{1/2} \nabla v\|_{0,K}^2 \right )^{1/2}.
\]
The nonconforming finite element approximation is to find $u_h \in \cU_{g,D}^{nc}$ such that
\begin{eqnarray} \label{non-conforming}
a_h(u_h,v)=f(v), \quad \forall \, v \in \cU_{0,D}^{nc}.
\end{eqnarray}

\subsection{$L^2$ Representation of the Error}
For each edge $e \in \cE^{h}$, denote by $h_e$ the length of $e$; denote by $\bn_e$ 
a unit vector normal to $e$.
When $e \in \cE_D^h\, \cup \, \cE_N^h$, denote by $K_e^+$ the boundary element with the edge $e$, and assume that $\bn_e$ is the unit outward normal vector of $K_e$.
For any $e \in \cE_I^h$, let $K_e^+$ and $K_e^-$ be the two elements sharing the common edge $e$ assuming that 
  \[
  \alpha^+_e\equiv\alpha_{K_e^+} \ge \alpha_{K_e^-}\equiv \alpha^-_e,
 \] 
and that $\bn_e$ coincides with the unit outward normal vector of $K_e^+.$  Denote by $v|_e^+$ and $v|_e^-$, respectively, the traces of the double valued function $v$ over $e$ restricted on $K_e^+$ and $K_e^-$.
For any $v \in H^1(\cT^h)$,  
denote the normal flux jump over edge $e \in \cE^{h}$ by
\begin{equation*}
	\big\llbracket \alpha \nabla v \cdot \bn_e \big\rrbracket _e:=
	\left\{
	\begin{array}{lll}
		(\alpha \nabla v\cdot \mathbf{n}_e)|_e^+ 
			-
		(\alpha \nabla v \cdot \mathbf{n}_e)|_e^-,& e \in \cE_I^h,\\[2mm]
		0,& e\in \cE_D^h,\\[2mm]
		(\alpha \nabla v \cdot \mathbf{n}_e)|_e-g_{_N}, & e\in \cE_N^h, 
		\end{array}
		\right.
\end{equation*}
and the value jump over edge $e \in \cE^{h}$ by
\begin{equation*}
\llbracket v \rrbracket_e:=\left\{
\begin{array}{lll}
	v|_e^+-v|_e^-, 	&e \in \cE_I^h,\\[2mm]
	v|_e-g_{_D}, 		& e\in \cE_D^h,\\[2mm]
	0, 			& e \in \cE_N^h.
\end{array}
\right.
\end{equation*}
The arithmetic average over edge $e \in \cE^{h}$ is denoted by
\begin{equation*}
\{v\}_e:= 
	\left\{
	\begin{array}{lll}
		\dfrac{1}{2}\left(v|_e^+ + v|_e^-\right), 	
			& e\in \cE_I^h, \\[2mm]
		v|_e, 
			&e\in \cE_D^h \cup \cE_N^h.
	  \end{array}
	\right.
\end{equation*}
A simple calculation leads to the following identity:
\begin{equation}\label{jump product}
	\llbracket uv \rrbracket_e=\{u\}_e\llbracket v \rrbracket_e
	+\llbracket u \rrbracket_e\{v\}_e, \quad \forall \, e \in \cE_I^h.
\end{equation}
For any $v \in \cU_{0,D}^{nc}$, it is well known that the following orthogonality property holds
\begin{equation} \label{orthogonality}
	\int_e \llbracket v \rrbracket \,ds=0, \quad \forall \,e \in \cE_I^h \cup \cE_N^h 
		\quad \mbox{and} \quad
	\int_e v \, ds=0, \quad \forall \,e \in \cE_D^h.
\end{equation}
Let $u$ and $u_h$ be the solutions of $(\ref{garlerkin})$ and (\ref{non-conforming}), respectively. It is shown in \cite{CaYeZh:2011} that
\begin{equation} \label{bilinear}
	a_h(u,v_h)
		=f(v_h) +\sum_{e\in \cE_I^h} \int_e (\alpha \nabla u \cdot \bn_e) \, \llbracket v_h \rrbracket \,ds
		+\sum_{e \in \cE_D^h } \int_{e} (\alpha \nabla u \cdot \bn_e) \, v_h \,ds, 
		\quad \forall \, v_h \in \cU_{0,D}^{nc}.
\end{equation}
Denote the true error by 
 \[
 	E=u-u_h.
 \]
Difference of  (\ref{bilinear}) and (\ref{non-conforming}) yields the following error equation:
\begin{equation} \label{Error Equation}
a_h(E,v_h)=\sum_{e\in \cE_I^h} \int_e (\alpha \nabla u \cdot \bn_e) \, \llbracket v_h \rrbracket   \,ds
			+\sum_{e \in \cE_D^h } \int_{e} (\alpha \nabla u \cdot \bn_e) \, v_h\,ds, 
				\quad \forall \,v_h \in \cU_{0,D}^{nc}.
\end{equation}
Introducing the element residual, the numerical flux jump, and the numerical solution jump  
\[
	r_K=\big( f+\nabla\cdot (\alpha \nabla u_h) \big) \big|_K, \quad \forall \, K \in \cT^h,
\]
\[
	j_{\sigma,e}=\big \llbracket \a \nabla u_h \cdot \bn_e \big \rrbracket_e 
		\quad \mbox{and} \quad
	j_{u,e}=\llbracket u_h\rrbracket_e,  \quad \forall \, e\in \cE^{h},
\]
respectively, then the true error in the (broken) energy norm may be expressed in terms of those
quantities.

\begin{lemma} \label{lemma1}
Let $E_h \in \cU_{0,D}^{nc}$ be an interpolation of $E$, then we have the following 
$L^2$ representation of the error $E$ in the (broken) energy norm:
\begin{equation} \label{error}
	a_h(E,E)=\!\!\!\!\!\sum_{K \in \cT^h} (r_K, E-E_h)_K
		-\!\!\!\!\!\sum_{e\in \cE_I^h \cup \cE_N^h } \int_e j_{\sigma,e} \, \{E-E_h\}\,ds 
	    	-\!\!\!\!\!\sum_{e\in \cE_I^h \cup \cE_D^h} \int_e \{ \alpha \nabla E \cdot \bn_e \}\, j_{u,e} \,ds.
\end{equation}
\end{lemma}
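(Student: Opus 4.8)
The plan is to start from the splitting $a_h(E,E)=a_h(E,E_h)+a_h(E,E-E_h)$ and to treat the two pieces by complementary tools. Since $E_h\in\cU_{0,D}^{nc}$, the first piece $a_h(E,E_h)$ is handled directly by the error equation (\ref{Error Equation}) with $v_h=E_h$, which rewrites it as edge integrals of $(\alpha\nabla u\cdot\bn_e)$ paired with $\llbracket E_h\rrbracket$ on $\cE_I^h$ and with $E_h$ on $\cE_D^h$. The second piece $a_h(E,E-E_h)$ will be expanded by integrating by parts element by element; note that $E-E_h$ need not lie in the finite element space (as $u$ is not piecewise linear), so the error equation does not apply to it and a direct computation is required.

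For the integration by parts, on each $K\in\cT^h$ I would write $(\alpha\nabla E,\nabla(E-E_h))_K=-(\nabla\cdot(\alpha\nabla E),E-E_h)_K+\int_{\partial K}(\alpha\nabla E\cdot\bn)(E-E_h)\,ds$. Because $\alpha$ is constant on $K$ and $u_h$ is linear, $\nabla\cdot(\alpha\nabla E)=-f-\nabla\cdot(\alpha\nabla u_h)=-r_K$, so the volume terms assemble into $\sum_K(r_K,E-E_h)_K$. Summing the boundary terms over all elements and regrouping by edges, using that $\bn_e$ is the outward normal of $K_e^+$, turns each interior edge contribution into $\int_e\llbracket(\alpha\nabla E\cdot\bn_e)(E-E_h)\rrbracket\,ds$, to which I apply the product identity (\ref{jump product}). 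Since $u$ is the exact solution, its trace and normal flux are single valued across interior edges, giving $\llbracket\alpha\nabla E\cdot\bn_e\rrbracket=-j_{\sigma,e}$ and $\llbracket E\rrbracket=-j_{u,e}$; the factor $\{E-E_h\}$ then produces the $-j_{\sigma,e}\{E-E_h\}$ term, while $\{\alpha\nabla E\cdot\bn_e\}\llbracket E\rrbracket$ produces the $-\{\alpha\nabla E\cdot\bn_e\}\,j_{u,e}$ term. On Neumann and Dirichlet edges the single boundary trace is treated analogously, using $\alpha\nabla u\cdot\bn_e=g_{_N}$ on $\cE_N^h$ and $E=-j_{u,e}$ on $\cE_D^h$.

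The crux, and the step I expect to be the main obstacle, is showing that all remaining $E_h$-dependent terms cancel. After the product identity the factor $\llbracket E-E_h\rrbracket=-j_{u,e}-\llbracket E_h\rrbracket$ leaves a spurious $-\{\alpha\nabla E\cdot\bn_e\}\llbracket E_h\rrbracket$ on interior edges, and the Dirichlet boundary produces a spurious $-(\alpha\nabla E\cdot\bn_e)E_h$; these must combine with the edge integrals coming from the error equation applied to $a_h(E,E_h)$. Writing $\alpha\nabla E=\alpha\nabla u-\alpha\nabla u_h$ and using the continuity of $\alpha\nabla u\cdot\bn_e$, the contributions involving $\alpha\nabla u$ cancel exactly against the error-equation terms, and what survives is $\int_e\{\alpha\nabla u_h\cdot\bn_e\}\llbracket E_h\rrbracket\,ds$ on interior edges and $\int_e(\alpha\nabla u_h\cdot\bn_e)E_h\,ds$ on Dirichlet edges. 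Here the nonconforming structure is decisive: $\alpha\nabla u_h\cdot\bn_e$ is constant on each edge because $u_h$ is piecewise linear and $\alpha$ is piecewise constant, so it pulls out of the integral, and the orthogonality property (\ref{orthogonality}) gives $\int_e\llbracket E_h\rrbracket\,ds=0$ and $\int_e E_h\,ds=0$ for $E_h\in\cU_{0,D}^{nc}$. Hence both surviving terms vanish, and collecting the remaining pieces yields exactly (\ref{error}).
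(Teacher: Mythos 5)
Your proposal is correct and follows essentially the same route as the paper's proof: the splitting $a_h(E,E)=a_h(E,E-E_h)+a_h(E,E_h)$, element-wise integration by parts combined with the product identity (\ref{jump product}) and the continuity of $u$ and $\alpha\nabla u\cdot\bn_e$, the orthogonality (\ref{orthogonality}) together with the edge-wise constancy of $\alpha\nabla u_h\cdot\bn_e$ to kill the $u_h$-flux terms against $E_h$, and the error equation (\ref{Error Equation}) with $v_h=E_h$ to cancel the remaining $(\alpha\nabla u\cdot\bn_e)$-terms. The only difference is organizational: the paper invokes the orthogonality (its equation (\ref{orth2})) inside the computation of $a_h(E,E-E_h)$, whereas you defer it to the final cancellation step, which changes nothing of substance.
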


\begin{proof}
First note that $ \{\alpha \nabla u_h \cdot \bn_e\}_e$ is a constant for every 
$e \in \cE_{h}$.  
The orthogonality in 
(\ref{orthogonality}) leads to
 \begin{equation}\label{orth2}
  	\int_e \{\alpha \nabla u_h \cdot \bn_e\} \llbracket E_h \rrbracket \,ds=0, 
 	\quad \forall \,e \in \cE_I^h
 	\quad \mbox{and} \quad
 	\int_e (\alpha \nabla u_h \cdot \bn_e)\, E_h \,ds=0, 
	\quad \forall\, e \,\in \cE_D^h.
 \end{equation}
It follows from integration by parts, (\ref{jump product}), the continuities of the normal component of
the flux $ -\alpha \nabla u$ and the solution $u$, and (\ref{orth2}) that
	\begin{eqnarray*}
	&& a_h(E,\,E-E_h)
		  = \sum_{K \in \cT^h} \left(\alpha \nabla E, \nabla(E-E_h)\right) \\[2mm]
	&=&\sum_{K \in \cT^h} (r_K, \,E-E_h)_K
		+\sum_{e\in \cE_I^h} \int_e \big\llbracket (\alpha \nabla E \cdot \bn_e)\, (E-E_h) \big\rrbracket \,ds \\[2mm]
	&&\quad+\sum_{e\in \cE_D^h \cup \cE_N^h} \int_e  (\alpha \nabla E \cdot \bn_e) \, (E-E_h)  \,ds\\[2mm]
	&=&\sum_{K \in \cT^h} (r_K, \,E-E_h)_K
		+\sum_{e\in \cE_I^h} \int_e \llbracket \alpha \nabla E \cdot \bn_e \rrbracket \, \{E-E_h\}\,ds \\[2mm]
	&&\quad +\sum_{e\in \cE_I^h} \int_e \{ \alpha \nabla E \cdot \bn_e \} \, 
		\big(\llbracket E\rrbracket - \llbracket E_h\rrbracket \big)\,ds
		+\sum_{e\in \cE_D^h \cup \cE_N^h} \int_e  (\alpha \nabla E \cdot \bn_e) \, (E-E_h) \,ds 	\\[2mm]
	&=&\!\!\!\!\!\sum_{K \in \cT^h} (r_K, \,E-E_h)_K
		-\!\!\!\!\!\sum_{e\in \cE_I^h \cup \cE_N^h} \int_e j_{\sigma,e}\, \{E-E_h\}\,ds 
		-\!\!\!\!\!\sum_{e\in \cE_I^h \cup \cE_D^h} \int_e \{\alpha \nabla E \cdot \bn_e \} \, j_{u,e}\,ds\\[2mm]
	&&\quad
		-\sum_{e\in \cE_I^h} \int_e  (\alpha \nabla u \cdot \bn_e)  \, \llbracket E_h\rrbracket \,ds
		-\sum_{e\in \cE_D^h } \int_e  (\alpha \nabla u \cdot \bn_e) \, E_h \,ds,
	\end{eqnarray*}
which, together with the error equation in (\ref{Error Equation}) with $v_h=E_h$, yields
    \begin{eqnarray*}
    	&&a_h(E,E)=a_h(E,\,E-E_h)+a_h(E,E_h)\\[2mm]
	&=&\!\!\!\!\!\sum_{K \in \cT^h} (r_K, \,E-E_h)_K
		-\!\!\!\!\!\sum_{e\in \cE_I^h \cup \cE_N^h} \int_e j_{\sigma,e}\, \{E-E_h\}\,ds 
		-\!\!\!\!\!\sum_{e\in \cE_I^h \cup \cE_D^h} \int_e \{ \alpha \nabla E \cdot \bn_e \} \, j_{u,e}\,ds.
 	\end{eqnarray*}
This completes the proof of the lemma.
\end{proof}

\section{Indicator and Estimator} \label{sec 3}
\setcounter{equation}{0}
In this section, based on the $L^2$ representation of the true error in the energy norm in Lemma~2.1, 
we first introduce the ``standard'' indicator and 
the corresponding estimator.
Our standard estimator consists of the usual element residual and edge flux jump plus
the edge solution jump that replaces
the edge tangential derivative jump of existing residual based estimators.
Since the robustness of the reliability bound of estimators was established under the quasi-monotonicity 
assumption on the distribution of the diffusion coefficient,  to avoid such an assumption
we introduce a new indicator and the corresponding estimator by modifying the edge solution jump at elements where the quasi-monotonicity assumption fails.

For any $K\in \cT^h$, denote by $\cN^h_K$ and $\cE^h_K$, respectively, the sets of three
vertices and three edges of $K$. Denote the respective indicators of element residual, edge flux jump,
and edge solution jump by
\begin{equation*}
\begin{aligned}
&\eta_{R_f,K}^2=\frac{h_K^2}{\alpha_K}\, \|r_K\|_{0,K}^2,\\[2mm]
&\eta_{J_{\sigma},K}^2=\sum_{e \in \cE_K^h \cap \cE_I^h} \frac{h_e }{2\,\alpha_{e}^+}\, \| j_{\sigma,e} \|_{0,e}^2
+\sum_{e\in \cE_K^h \cap \cE_N^h} \frac{h_e}{ \alpha_e}\, \| j_{\sigma,e}\|_{0,e}^2, \\[2mm]
 \mbox{and} \quad &\eta_{J_u,K}^2=\sum_{e \in \cE_K^h \cap \cE_I^h}    \frac{\alpha_{e}^-}{2\,h_e}\, \| j_{u,e}\|_{0,e}^2+
\sum_{e\in \cE_K^h \cap \cE_D^h}\frac{\alpha_e}{ h_e}\, \| j_{u,e}\|_{0,e}^2.
\end{aligned}
\end{equation*}
Then the standard indicator associated with $K\in \cT^h$ is defined by 
\begin{equation}\label{etaK}
\eta_K=\left(\eta_{R_f,K}^2 +  \eta_{J_{\sigma},K}^2 +  \eta_{J_u,K}^2 \right)^{1/2},
 \end{equation}
and the standard estimator by 
\begin{equation}\label{eta}
\eta=\left(\sum_{K\in\cT^h} \eta_K^2 \right)^{1/2}.
 \end{equation}

\begin{remark}
Instead of using the edge tangential derivative jump as existing 
residual-based error estimators, the indicator $\eta_K$ and   
the resulting estimator $\eta$ above employ the edge solution jump $\eta_{J_u,K}$.
In two dimensions, the edge solution jump is equivalent to the tangential derivative jump
(see (\ref{jump to tangential jump})).
Nevertheless, numerical results for a test problem show that
our estimator is more accurate than the existing estimators using the edge tangential derivative jump.
\end{remark}
By the standard argument \cite{BeVe:2000}, it is shown in Section~\ref{sec:etaK}
that the indicator $\eta_K$ is efficient uniformly
with respect to the jump of the diffusion coefficient. By using the Helmholtz decomposition and the modified
Cl\'{e}ment-type interpolation, one can also prove that the estimator $\eta$ is reliable. Moreover, the 
reliability constant is independent of the jump of $\alpha(x)$ provided that the distribution of $\alpha(x)$
is quasi-monotone \cite{Pe:2002}. In order to remove this assumption, 
we present a new analysis for estimating the reliability 
bound without using the Helmholtz decomposition. 
The analysis will make use of the structure of the 
nonconforming element in two-dimensions, and it enables us to bound
both the element residual and the numerical flux jump uniformly without 
the quasi-monotonicity. 
Unfortunately, we are unable to do the same for the 
numerical solution jump. As an alternative, we modify 
the indicator of the numerical solution jump at elements where the quasi-monotonicity is not satisfied.

To this end, 
for each vertex $z \in \cN^{h}$, denote by $\omega_z^h$ and $\cE_z^h$, respectively,
 the sets of all elements $K \in \cT^h$ and all edges $e \in \cE^{h}$ having $z$ as a common vertex. Let 
 \[
  \hat \omega_z^h=\{ K \in \omega_z^h \, :\, \alpha_K=\max \limits_{K' \in \omega_z^h}    
  \alpha_{K'} \} \subset \omega_z^h
 \]
be the set of all elements in $\omega_z^h$ such that the corresponding diffusion 
coefficients are the greatest. 
For any interface intersecting point $z\in \cN^{h}$, the vertex patch $\omega_z^h$ is called quasi-monotone (see \cite{Pe:2002})
if for each $K \in \omega_z^h$, there exists a subset $\hat w_{z,K}^h$ of $\omega_z^h$ such that the union of elements in $\hat w_{z,K}^h$ is a Lipschitz domain and that 
\begin{itemize}
	\item if $z \in \cN^{h} \setminus \cN_D^h$, then $\{K\} \cup \hat \omega_z^h \subset \hat w_{z,K}^h$ and 
	$\alpha_K \le \alpha_{K'}, \quad \forall \,K' \in \hat w_{z,K}^h$;
	\item if $z \in \cN_D^h$, then $K \in \hat w_{z,K}^h$, $ \partial (\cup_{K' \in \hat w_{z,K}^h} K') \cap \Gamma_D \neq \emptyset$ and $\alpha_K \le \alpha_{K'}, \quad \forall \, K' \in \hat w_{z,K}^h$.
\end{itemize}
  Denote by
 \[
 	\cN_M=\{z\in \cN^h :\, \omega_z^h \mbox{ is not quasi-monotone }\}  
 \]
the set of all interface intersecting points whose vertex patches are not quasi-monotone. 

For each element $K \in \cT^h$, subdivide it into four sub-triangles by connecting three mid-points 
of edges of $K$, and denote by $\cT^{h/2}$ the refined triangulation. Let 
$\cN^{h/2}$ be the sets of all vertices based on $\cT^{h/2}$.
 \[
 	\cN^{h/2}=\cN^{h/2}_I\cup \cN^{h/2}_D\cup \cN^{h/2}_N
  		\quad \mbox{and} \quad
			 \cE^{h/2}=\cE^{h/2}_I\cup \cE^{h/2}_D\cup \cE^{h/2}_N
 \]
where $\cN^{h/2}_I (\cE^{h/2}_I)$, $\cN^{h/2}_D (\cE^{h/2}_D)$, and $\cN^{h/2}_N (\cN^{h/2}_N)$ are the sets of all interior vertices (edges) of $\cT^{h/2}$,
all boundary vertices (edges) on $\overline \Gamma_D$ and $\Gamma_N$ , respectively.
 Let 
 \[
 	\cU^{h/2,c}_{g,D}=\left \{
 		v \in H^1(\O)\,:\,v|_T \in P_1(T), \quad \forall \, T\in \cT^{h/2}
	 		\mbox{ and } v|_{{\Gamma_D}}=g_{_D}
 	\right \},
 \]
which is the continuous piecewise linear finite element space associated with the triangulation $\cT^{h/2}$. 

Next, we introduce an interpolation operator, $I_{h/2} : \,  \cU^{nc}_{g,D} \to \cU^{h/2,c}_{g,D}$,
from the nonconforming piecewise linear finite element space on $\cT^h$ to the
conforming piecewise linear finite element space on $\cT^{h/2}$. For a given $v\in \cU^{nc}_{g,D}$, 
the nodal values of $I_{h/2} v \in \cU^{h/2,c}_{g,D}$ are defined as follows:
 \begin{itemize}
 \item[(i)] set 
 \[(I_{h/2} v)(z)=g_{_D}(z),\quad \forall \, z \in \cN_D^h; \] 
 \item[(ii)]  set 
  \[
  (I_{h/2} v) (m_e)= v (m_e),\quad \forall\, e\in \cE^h;
  \]
 \item[(iii)]  set 
  \[
  (I_{h/2} v) (z)= v|_{K_z} (z), \quad \forall\, z\in (\cN_I^h \cup \cN_N^h),
  \]
  where $K_z$ is chosen to be one element in $\hat \omega_z^h$.

 \end{itemize}
 For each vertex $z \in \cN^{h}$, denote by $\omega_z^{h/2}$ 
the sets of all elements $T \in \cT^{h/2}$
having $z$ as a common vertex.
For element $K\in\cT^h$ with at least one vertex in $\cN_M$,
the indicator of the numerical solution jump $\eta_{{J_u},K}$  is modified as follows:

  \begin{eqnarray*}
 	\tilde \eta_{J_u,K}^2
	&=&\sum_{ z \in \cN_K^h \setminus \cN_M} 
		 \left( \sum_{e \in \cE_z^{h/2}\cap \cE_K^{h/2} \cap \cE_I^{h/2} } \frac{\alpha^{-}_e}{4 \,h_e}\| j_{u,e}\|_{0,e}^2 
 		+ \sum_{e \in \cE_z^{h/2}\cap \cE_K^{h/2} \cap \cE_D^{h/2} } \frac{\alpha_e}{\,2h_e}\| j_{u,e}\|_{0,e}^2 
		 \right)  \\ [4mm]
	&&\quad+\sum_{z \in \cN_K^h \cap \cN_M} \frac{\alpha_K}{2h_K} \big \|I_{h/2} u_h -u_h \big \|^2_{0, \partial T_{K,z}},
\end{eqnarray*}
where $T_{K,z}=\omega_z^{h/2} \cap K$.

Then the modified indicator is defined as follows:
 \begin{equation}\label{tilde-etaK}
 \tilde{\eta}_K= \left\{\begin{array}{ll}
      \left(\eta_{R_f,K}^2 +  \eta_{J_{\sigma},K}^2 + \tilde \eta_{J_u,K}^2 \right)^{1/2},
      	& \mbox{if }\, \cN_K^h \cap \cN_M \neq \emptyset
      \\[4mm]
      \eta_K, 
      	& \mbox{otherwise.}
     \end{array}\right.
\end{equation}
The corresponding modified estimator is then given by 
 \begin{equation}\label{tilde-eta}
	\tilde \eta =\left(\sum_{K\in\cT_h } \tilde \eta_{K}^2 \right)^{1/2}.
\end{equation}
\begin{remark}
In the case that $\cN_M=\emptyset$; i.e., the distribution of the diffusion coefficient is quasi-monotone, 
then $\eta_{J_u,K}=\tilde \eta_{J_u,K}$ for all $K \in \cT^h$ and hence $\tilde{\eta}_K = \eta_K$ and $\tilde{\eta} = \eta$.
\end{remark}

\section{The Modified Cl\'ement-type Interpolation}
\setcounter{equation}{0}

In this section, following the idea in \cite{BeVe:2000, DrSaWi:1996}, we introduce the modified 
Cl\'ement-type interpolation operator for the non-conforming linear element
and establish its approximation properties. 

Denote by 
 \[
 \fint_{\omega} v \,dx=\frac{1}{\mbox{meas}(\omega)} \int_{\omega} v  \,dx
 \]
the mean value of a given function $v$ on a given measurable set $\omega$ in $\cR^2$
with positive 2-dimensional Lebesgue measure $\mbox{meas}(\omega)$. With this convention, set
$$
\pi_e(v)=\left\{
	\begin{array}{lll}
		\fint _{K_e^+} v\,dx, & \forall\, e\in \cE_I^{h},
		\vspace{2mm}\\ 
		\fint _{K_e} v\,dx, & \forall\, e\in \cE_D^{h} \cup \cE_N^h.
 	\end{array}
	\right.
$$
The modified Cl\'ement interpolation operator $\cI_h : H^1(\cT^h) \rightarrow \cU^{nc}$ 
is defined by
\begin{eqnarray} \label{interpolation}
	\cI_h(v)=\sum_{e \in \cE^{h}} (\pi_{e} v) \phi_{e},
\end{eqnarray}
where $\phi_e$ is the nodal basis function of $\cU^{nc}$ which takes value $1$ at $m_e$ and takes $0$ at mid-points of other edges.

For any $K\in\cT^h$, let $\triangle_K$ be the union of elements in $\cT^h$ sharing an edge 
with $K$. For any $e\in\cE^{h}$, let $\triangle_e$ be the union of elements in $\cT^h$ having  the common edge $e$.

\begin{lemma} \label{lemma2}
For any function $v \in H^1(\cT^h)$, 
then the modified Cl\'ement interpolation satisfies 
the following approximation properties:
 \begin{equation} \label{clement1}
	 \|v-\cI_h v\|_{0,K} 
	 \lesssim \frac{h_K}{\alpha_K^{1/2}} \left(
   	\tri v \tri_{ \triangle_K}
  	+\sum_{e\in\cE^h_K}\left(\dfrac{\alpha^-_e}{h_e}\right)^{1/2}
 	 \big\|\llbracket v\rrbracket\big\|_{0,e}\right), \quad \forall \, K\in \cT^h 
\end{equation} 
and
\begin{equation} \label{clement2}
	\big \|v|_e^+-\pi_{e} v \big \|_{0,e} 
	\lesssim  \left( \frac{h_e} {\alpha^+_e} \right)^{1/2} \tri v \tri_{0, K_e^+}, 
	\quad \forall \,e \in \cE^{h}.
\end{equation}
\end{lemma}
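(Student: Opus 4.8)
The plan is to prove the two bounds by the standard Cl\'ement-interpolation machinery---local Poincar\'e and scaled trace inequalities---while exploiting one structural feature of the definition of $\pi_e$: the mean $\pi_e v$ is taken over $K_e^+$, the side of $e$ carrying the \emph{larger} diffusion coefficient. This choice, together with the fact that each midpoint is shared by at most two triangles, is exactly what makes the constants robust with respect to the jump of $\alpha$.

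I would dispatch (\ref{clement2}) first, since it is essentially a trace estimate on a single element. As $\alpha$ is constant on $K_e^+$ with value $\alpha_e^+$, one has $\|\nabla v\|_{0,K_e^+}=(\alpha_e^+)^{-1/2}\tri v\tri_{0,K_e^+}$. Writing $w=v-\pi_e v$ (so $\nabla w=\nabla v$) and applying the scaled trace inequality $\|w\|_{0,e}^2\lesssim h_e^{-1}\|w\|_{0,K_e^+}^2+h_e\|\nabla w\|_{0,K_e^+}^2$, the Poincar\'e inequality $\|v-\pi_e v\|_{0,K_e^+}\lesssim h_{K_e^+}\|\nabla v\|_{0,K_e^+}$ (valid because $\pi_e v=\fint_{K_e^+}v$ is the mean of $v$ over $K_e^+$), and mesh regularity $h_e\approx h_{K_e^+}$, I obtain $\|v|_e^+-\pi_e v\|_{0,e}\lesssim h_e^{1/2}\|\nabla v\|_{0,K_e^+}=(h_e/\alpha_e^+)^{1/2}\tri v\tri_{0,K_e^+}$, which is (\ref{clement2}).

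For (\ref{clement1}), I would use that the three basis functions $\phi_e$, $e\in\cE_K^h$, form a partition of unity on $K$ (the unique $P_1$ function equal to $1$ at the three non-collinear edge midpoints is the constant $1$), so that $\cI_h$ reproduces constants on $K$. Setting $\bar v_K=\fint_K v\,dx$, I write
\[
  v-\cI_h v=(v-\bar v_K)-\sum_{e\in\cE_K^h}(\pi_e v-\bar v_K)\,\phi_e \quad\mbox{on }K,
\]
and then, using $\|v-\bar v_K\|_{0,K}\lesssim h_K\|\nabla v\|_{0,K}$ and $\|\phi_e\|_{0,K}\lesssim h_K$,
\[
  \|v-\cI_h v\|_{0,K}\lesssim h_K\|\nabla v\|_{0,K}+h_K\sum_{e\in\cE_K^h}|\pi_e v-\bar v_K|.
\]
The whole estimate now reduces to bounding $|\pi_e v-\bar v_K|$ for each edge of $K$.

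This last quantity is the crux, and it is where the discontinuity of $v$ across edges and the coefficient distribution must be handled together. If $K=K_e^+$ then $\pi_e v=\bar v_K$ and the term vanishes; otherwise $K=K_e^-$ and $\pi_e v=\bar v_{K'}$ with $K'=K_e^+$ the neighbour, for which $\alpha_{K'}=\alpha_e^+\ge\alpha_e^-=\alpha_K$. Inserting the two edge-trace means $\fint_e v|_e^{\pm}$ gives
\[
  |\bar v_{K'}-\bar v_K|\le\Big|\bar v_{K'}-\fint_e v|_e^+\Big|+\Big|\fint_e v|_e^+-\fint_e v|_e^-\Big|+\Big|\fint_e v|_e^--\bar v_K\Big|,
\]
where the outer terms are controlled by $\|\nabla v\|_{0,K'}$ and $\|\nabla v\|_{0,K}$ (again via the scaled trace and Poincar\'e inequalities and $h_e\approx h_K\approx h_{K'}$), while the middle term equals $|\fint_e\llbracket v\rrbracket|\le h_e^{-1/2}\|\llbracket v\rrbracket\|_{0,e}$. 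Converting gradients to the energy norm and using the structural inequality $\alpha_{K'}^{-1/2}\le\alpha_K^{-1/2}$ (because $\pi_e$ averages toward the larger coefficient) yields $\|\nabla v\|_{0,K'}\le\alpha_K^{-1/2}\tri v\tri_{0,K'}$, while $\alpha_e^-=\alpha_K$ lets the jump term be written as $\alpha_K^{-1/2}(\alpha_e^-/h_e)^{1/2}\|\llbracket v\rrbracket\|_{0,e}$. Substituting back, factoring out $h_K/\alpha_K^{1/2}$, and bounding $\tri v\tri_{0,K}+\tri v\tri_{0,K'}\le\tri v\tri_{\triangle_K}$ since $K,K'\subset\triangle_K$, gives exactly (\ref{clement1}). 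The delicate and decisive point---the reason robustness holds without any quasi-monotonicity---is precisely that, each midpoint being shared by at most two triangles and $\pi_e$ selecting the side of larger $\alpha$, the factor $\alpha_{K'}^{-1/2}$ is automatically dominated by $\alpha_K^{-1/2}$, so no chain of elements and no monotone ordering of coefficients is ever required.
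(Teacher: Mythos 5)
Your proposal is correct and follows essentially the same route as the paper: reduce via the partition of unity $\sum_{e\in\cE_K^h}\phi_e=1$ to per-edge contributions, observe that the term vanishes (or is a plain Poincar\'e bound) when $K=K_e^+$, and when $K=K_e^-$ connect $\fint_K v$ to $\fint_{K_e^+}v$ through the edge $e$ using trace and Poincar\'e inequalities, picking up the jump term $h_e^{-1/2}\|\llbracket v\rrbracket\|_{0,e}$ and exploiting that $\pi_e$ averages over the larger-coefficient side so $\alpha_{K_e^+}^{-1/2}\le\alpha_K^{-1/2}$. The only difference is bookkeeping---you estimate the scalar differences $|\pi_e v-\bar v_K|$ via edge means, while the paper estimates $\|v-\pi_e v\|_{0,K}$ via $L^2(e)$ norms of the inserted traces---which is cosmetic.
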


Here and thereafter, we use the $a \lesssim b$ notation to indicate that $a \le c\,b$ 
for a further not speficified constant $c$, which depends only on the shape regularity of $\cT^h$ 
but not on the data of the underlying problems, in particular, the jump of the diffusion coefficient. Unlike
the modified Cl\'ement interpolation for the conforming elements, 
there is an extra jump term in the approximation property in (\ref{clement1}) which is due to 
the discontinuity of the function $v$ across the edges of $K$.

\begin{proof}
 For any $K \in \cT^h$, since the nodal basis functions form a partition of the unity, 
the triangle inequality gives
\begin{eqnarray*}
	\|v- \cI_h v\|_{0,K}=\big \| \sum_{e \in \cE_K^h} \phi_{e} (v- \pi_{e} v)\big \|_{0,K} 
	\le  \sum_{e \in \cE_K^h}  \| v- \pi_{e} v \|_{0,K}.
\end{eqnarray*}
Hence, to show the validity of (\ref{clement1}), it suffices to prove that 
\begin{equation}\label{clement3}
 	 \|v- \pi_e v \|_{0,K} 
 	 \lesssim 
	\frac{h_K}{\alpha_K^{1/2}} \left(
	   \tri v \tri_{ \triangle_e}
  	+\left(\dfrac{\alpha^-_e}{h_e}\right)^{1/2}
  	\big\|\llbracket v\rrbracket\big\|_{0,e}\right),
  	\quad  \forall \, e \in \cE_K^h.
\end{equation}
Since the set $\triangle_e$ contains only two elements for all $e \in \cE_I^h$, 
it is obvious that 
$K=K_e^+ $ or $K_e^-$. 
If $K=K_e^+$, then (\ref{clement3}) is a direct consequence of the Poincar\'e inequality:
 \[
 	\|v-\pi_e v \|_{0,K}
 	=\big \|v - \fint_K v \,dx \big \|_{0,K} 
 	\lesssim h_K \alpha_K^{-1/2}\tri v\tri_K.
 \]
In the case that $K=K_e^-$, the triangle and the Poincar\'e inequalities imply 
 \begin{eqnarray*}
 	&&\|v -\pi_e v\|_{0,K} 
 		\le \big\| v-\fint_K v\,dx \big\|_{0,K}
 	 	+\big \| \fint_K v \,dx -\fint_{K_e^+} v\,dx \big\|_{0,K}\\[2mm]
 	&\lesssim& h_K \alpha_K^{-1/2} \tri v\tri_K
 		+h_K^{1/2} \big \| \left(\fint_K v \,dx -v|_e^-\right)
  		+\left(v|_e^+ -\fint_{K_e^+} v\,dx\right) 
  		-\llbracket v\rrbracket\big\|_{0,e}\\[2mm]
	\!\! &\le \!\!& \!\!h_K \alpha_K^{-1/2}  \tri v\tri_K
  		+h_K^{1/2}\!\!\left(\!\! \big \| \fint_K v \,dx -v|_e^- \big\|_{0,e}\!\! +
		\big \|\fint_{K_e^+} v\,dx-v|_e^+ \big\|_{0,e} \!\!
 		+\big\|\llbracket v\rrbracket\big\|_{0,e}\!\!\right).
\end{eqnarray*}
Next, we bound the three terms above.
It follows from the trace theorem and the Poincar\'e inequality that
 \[
  	h_K^{1/2} \big \|\fint_K v\,dx-v|_e^- \big \|_{0,e} 
	 \lesssim  \big \| \fint_K v \,dx -v \big\|_{0,K} +h_K \big | \fint_K v\,dx-v \big|_{1,K} 
	 \lesssim h_K \alpha_K^{-1/2} \tri v \tri_{K}.
\]
Similarly, we have 
 \begin{equation} \label{clement4}
 	 h_K^{1/2} \big \|\fint_{K_e^+} v\,dx-v|_e^+ \big \|_{0,e} 	 
  	\lesssim h_K \alpha_{K_e^+}^{-1/2} \tri v \tri_{K_e^+}.
 \end{equation}
Note that $\alpha^-_e=\alpha_K\le \alpha_e^+$, combining above three inequalities gives
 \[
	 \|v- \pi_e v\|_{0,K} 
 	\lesssim 
 	\frac{h_K}{\alpha_K^{1/2}} \left(
    	\tri v \tri_{ \triangle_e}
  	+\left(\dfrac{\alpha^-_e}{h_e}\right)^{1/2}
 	 \big\|\llbracket v\rrbracket\big\|_{0,e}\right),
 \]
which proves the validity of (\ref{clement3}) when $K=K_e^-$. 

When $e\in \cE^h_{D} \cup \cE^h_{N}$, 
(\ref{clement3}) is a direct consequence of the Poincar\'e inequality. This completes the proof of (\ref{clement3}) and hence (\ref{clement1}).
(\ref{clement2}) is a direct consequence of (\ref{clement4}).
This completes the proof of the lemma.
\end{proof}

\section{Local Efficiency Bound}\label{sec:etaK}
\setcounter{equation}{0}

In this section, we establish local efficiency bounds for the indicators $\eta_K$ and $\tilde{\eta}_K$ defined, respectively, in (\ref{etaK}) and (\ref{tilde-etaK}).
Both the bounds are independent of the jump of the diffusion coefficient, 
this robustness is obtained without (with) the quasi-monotonicity assumption 
(see \cite{BeVe:2000})
for the indicator $\eta_K$ ($\tilde{\eta}_K$). Nevertheless, 
numerical results presented in Section 7 for a benchmark test problem seems
to suggest that the modified indicator $\tilde{\eta}_K$
generates a better mesh than the standard indicator $\eta_K$.
By using local edge and element bubble functions, $\psi_e$ and $\psi_K$
(see \cite{Ve:1996} for their definitions and properties), it is a common practice to obtain the local efficiency bound for the residual-based a posteriori error
estimator. By properly weighting terms in the
indicator by the diffusion coefficient, one can show that the local efficiency bound is robust
without the quasi-monotonicity assumption. 
For the convenience of readers, we only sketch the proof in the following theorem.

\begin{theorem} {\em (Local Efficiency)}
Assuming that $u \in H^{1+\epsilon}(\O)$ and $u_h$ are the solutions of $(\ref{garlerkin})$ and 
$(\ref{non-conforming})$, respectively,
then the indicator $\eta_K$ 
satisfies the following local efficiency bound: 
 \begin{equation}\label{eff}
 	\eta_{K}  
	 \lesssim 
 	\tri E \tri_{\triangle_K}, \quad  \forall \;  K\in \cT^h.
 \end{equation}
%where $\triangle_K$ is the set of all elements in $\cT^h$ sharing an edge with $K$.
\end{theorem}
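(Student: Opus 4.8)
The plan is to establish the local efficiency bound by treating each of the three contributions to $\eta_K$ separately, using the standard bubble-function technique of Verf\"urth but with the diffusion coefficient inserted into the weights so that all hidden constants become independent of the jumps of $\alpha$. Throughout I would use the element residual $r_K = (f + \nabla\cdot(\alpha\nabla u_h))|_K$, the flux jump $j_{\sigma,e}$, and the solution jump $j_{u,e}$, and I would repeatedly exploit that $-\nabla\cdot(\alpha\nabla u) = f$ holds exactly so that $r_K$ measures the discrepancy between $u$ and $u_h$.

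\medskip\noindent\textbf{Element residual.} First I would bound $\eta_{R_f,K}^2 = (h_K^2/\alpha_K)\|r_K\|_{0,K}^2$. Let $\psi_K$ be the interior bubble on $K$ and test against $\psi_K r_K$. Since $f$ is piecewise linear and $\nabla\cdot(\alpha\nabla u_h)=0$ on $K$ (as $u_h|_K\in P_1$, so this term vanishes and $r_K = f|_K$ up to the linear data), an integration by parts against $\psi_K r_K$ together with $-\nabla\cdot(\alpha\nabla u)=f$ gives $\|r_K\|_{0,K}^2 \lesssim \alpha_K\,|E|_{1,K}\,h_K^{-1}\|r_K\|_{0,K}$, using the inverse and bubble estimates $\|\nabla(\psi_K r_K)\|_{0,K}\lesssim h_K^{-1}\|r_K\|_{0,K}$. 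Rearranging and multiplying by $h_K^2/\alpha_K$ yields $\eta_{R_f,K}\lesssim \tri E\tri_K$.

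\medskip\noindent\textbf{Flux jump.} Next I would bound $\eta_{J_\sigma,K}^2$. For an interior edge $e$ with the bubble $\psi_e$, I extend $j_{\sigma,e}$ to $\triangle_e$ and test against $\psi_e j_{\sigma,e}$ on the two adjacent elements. Integration by parts produces the flux jump on $e$ plus the element residuals on $K_e^+$ and $K_e^-$; the standard manipulation gives $\|j_{\sigma,e}\|_{0,e}^2 \lesssim h_e^{-1}\sum_{K'\in\triangle_e}\alpha_{K'}|E|_{1,K'}^2 + h_e\,\alpha_{\cdot}^{-1}\|r_{K'}\|^2$. The crucial point for robustness is the weight $h_e/(2\alpha_e^+)$: since $\alpha_e^+=\max(\alpha_{K_e^+},\alpha_{K_e^-})$ dominates, multiplying by this weight makes the resulting constant independent of the jump, and absorbing the already-bounded residual terms yields $\eta_{J_\sigma,K}\lesssim \tri E\tri_{\triangle_K}$. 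The Neumann edges are handled identically using $g_N$ piecewise linear.

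\medskip\noindent\textbf{Solution jump.} Finally, for $\eta_{J_u,K}^2$ with weight $\alpha_e^-/(2h_e)$, I would relate the solution jump to the true error via the trace of $E=u-u_h$. Since $u\in H^{1+\epsilon}(\O)$ is single-valued across $e$, the jump $j_{u,e}=\llbracket u_h\rrbracket_e = -\llbracket E\rrbracket_e$; moreover the orthogonality in $(\ref{orthogonality})$ guarantees $\int_e\llbracket E\rrbracket\,ds=0$, so a scaled trace/Poincar\'e inequality on the two elements adjacent to $e$ gives $\|j_{u,e}\|_{0,e}^2 \lesssim h_e\,|E|_{1,\triangle_e}^2$ with an $\alpha$-free geometric constant. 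The weight $\alpha_e^-=\min(\alpha_{K_e^+},\alpha_{K_e^-})$ is precisely the \emph{smaller} coefficient, so $(\alpha_e^-/h_e)\|j_{u,e}\|_{0,e}^2 \lesssim \alpha_e^-|E|_{1,\triangle_e}^2 \le \tri E\tri_{\triangle_e}^2$, which is robust. Summing the three bounds and taking the square root proves $(\ref{eff})$.

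\medskip I expect the main obstacle to be the solution-jump term: obtaining an $\alpha$-independent bound forces one to charge the jump against the element carrying the \emph{smaller} coefficient, and the correct choice of weight $\alpha_e^-$ together with the vanishing-mean property $(\ref{orthogonality})$ is exactly what makes this work. This is also the only piece genuinely special to the nonconforming setting, since in the conforming case the solution is continuous and no such term appears; here the whole design of the weights in $(\ref{etaK})$ is dictated by the need to make this step robust without the quasi-monotonicity assumption.
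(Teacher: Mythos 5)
Your proposal is correct, and its first two steps (element residual via $\psi_K$, flux jump via $\psi_e$, with the weights $h_K^2/\alpha_K$ and $h_e/\alpha_e^+$ absorbing the coefficient so the constants are jump-independent) are essentially identical to the paper's proof. Where you genuinely depart from the paper is the solution-jump term. The paper never estimates $\llbracket E\rrbracket$ directly: it first uses the midpoint continuity of $u_h$ to convert the solution jump into the tangential derivative jump, $\|j_{u,e}\|_{0,e}=\tfrac{1}{\sqrt{12}}h_e\|j_{\tau,e}\|_{0,e}$ (equation (\ref{jump to tangential jump})), and then runs a second edge-bubble argument, integrating by parts with $\grad^\perp$ against $j_{\tau,e}\psi_e$ and using $\llbracket \nabla u\cdot\btau_e\rrbracket=0$, to get $\|j_{\tau,e}\|_{0,e}\lesssim (\alpha_e^- h_e)^{-1/2}\tri E\tri_{\triangle_e}$. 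You instead exploit $j_{u,e}=-\llbracket E\rrbracket_e$, the zero mean $\int_e\llbracket E\rrbracket\,ds=0$, and a scaled trace/Poincar\'e inequality. This works, and the one detail worth spelling out is how the mean-zero property is used: with $c^{\pm}=\fint_{K_e^{\pm}}E\,dx$, trace plus Poincar\'e give $\|E|_e^{\pm}-c^{\pm}\|_{0,e}\lesssim h_e^{1/2}|E|_{1,K_e^{\pm}}$, and the vanishing mean of the jump is exactly what controls the remaining constant offset, $|c^+-c^-|\lesssim |E|_{1,K_e^+}+|E|_{1,K_e^-}$; hence $\|j_{u,e}\|_{0,e}\lesssim h_e^{1/2}\left(|E|_{1,K_e^+}+|E|_{1,K_e^-}\right)$, and multiplying by $\alpha_e^-/h_e$ with $\alpha_e^-\le\alpha_{K_e^{\pm}}$ reproduces exactly the paper's bound (\ref{value jump}). (Minor imprecision: (\ref{orthogonality}) is stated for $\cU_{0,D}^{nc}$, not for $u_h\in\cU_{g,D}^{nc}$, but the same midpoint argument gives $\int_e\llbracket u_h\rrbracket\,ds=0$ and hence the claim for $E$, including on Dirichlet edges since $g_{_D}$ is piecewise linear.) Comparing the two routes: yours is more elementary — no edge bubble, no extension of edge functions, no curl operator — and it only needs $L^2$-traces of the piecewise $H^1$ function $E$, whereas the paper's manipulation requires the edgewise tangential derivative of $u$ to make sense (this is where the $H^{1+\epsilon}$ assumption is really used beyond the flux terms). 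What the paper's detour buys is the identity (\ref{jump to tangential jump}) itself, which is not only a proof device but also the justification of Remark 3.1, i.e., the equivalence of the solution-jump indicator with the tangential-derivative jump appearing in existing residual estimators. Both routes yield the same robust bound and hence the same theorem.
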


\begin{proof}
For any $K\in\cT^h$, it follows from the properties of $\psi_K$, integration by parts,
and the Cauchy-Schwarz inequality that
   \begin{eqnarray*}
 	\|r_K\|_{0,K}^2 
  		&\lesssim  &\int_K (f+ \gradt(\alpha \nabla u_h)) \, r_K \psi_K\,dx
		 = \int_K \alpha \nabla(u-u_h) \cdot \nabla (r_K \psi_K) \,dx\\[2mm]
 	&\lesssim & \alpha_{K}^{1/2} \tri u-u_h \tri_K \, | r_K \psi_K |_{1,K}
		 \lesssim   \alpha_{K}^{1/2} h_K^{-1}   \tri u-u_h \tri_K \, \|r_K\|_{0,K},
 \end{eqnarray*}
which implies 
 \begin{equation} \label{element residual}
	 \|r_K\|_{0,K} 
 		\lesssim \frac{\alpha_K^{1/2}}{h_K} \, \tri u-u_h\tri_K, 
		\quad \forall \,K \in \cT^h.
\end{equation}

For any $e \in \cE_I^h$, by using the properties of $\psi_e$,  integration by parts, the Cauchy-Schwartz inequality, and (\ref{element residual}), we have
\begin{eqnarray*}
    \|  j_{\sigma,e} \|_{0,e}^2 
	&\lesssim&
	\int_e \llbracket \alpha \nabla u_h \cdot \bn_e \rrbracket \,
     	j_{\sigma,e} \psi_e \,ds
	=-\sum_{K \in \triangle_e} \int_{\partial K} (\alpha \nabla E \cdot \bn) \, j_{\sigma,e} \psi_e \,ds\\[2mm]
	&=& 
	\!\!-\!\! \sum_{K \in \triangle_e}\!\! \left(\int_K (\alpha \nabla E) 
    	\cdot \nabla (j_{\sigma,e} \psi_e)\, dx
	-\int_{K} r_K \, j_{\sigma,e} \psi_e \,dx \right)\\[2mm]
	&\lesssim&
	\left(\frac{\alpha^+_e}{h_e}\right)^{1/2} 
  	 \tri E \tri_{\triangle_e} 
	\|j_{\sigma,e}\|_{0,e}.
\end{eqnarray*}
%where $\tri E \tri_{\triangle_e}=\left(\sum_{K\in\triangle_e}  \tri E \tri_K^2\right)^{1/2}$.
Together with a similar bound for $e \in \cE_D^h \cup \cE_N^h$,  it implies 
 \begin{equation} \label{flux jump}
	 \|j_{\sigma,e}\|_{0,e}
	\lesssim \left(\frac{\alpha^+_e}{h_e}\right)^{1/2} 
 	\tri E \tri_{\triangle_e} , \quad
	\forall \,e \in \cE^{h}.
\end{equation}

For any $e \in \cE^{h}$, let $\bn_e=(n_1,\,n_2)^t$, then ${\btau}_e=(-n_2,\,n_1)^t$ is
the unit vector tangent to the edge $e$. Denote by 
$j_{\tau,e}=\llbracket \nabla u_h \cdot \btau_e \rrbracket_e$
the jump of the tangential derivative of the numerical solution $u_h$ along the edge $e$. 
By the continuity of $u_h$ at the midpoint $m_e$, 
we have
\begin{equation} \label{jump to tangential jump}
 	\|j_{u,e}\|_{0,e}=\frac{1}{\sqrt{12}}h_e\, \|j_{\tau,e}\|_{0,e},  \quad \forall\, e \in \cE_I^h.
 \end{equation}
For a scalar-valued function $v$, denote by 
$\grad^\perp v= \left(\dfrac{\partial v}{\partial y},\,-\dfrac{\partial v}{\partial x}\right)^t$ 
the formal adjoint operator of
the curl operator in two-dimensions.
For any $e \in \cE_I^h$, it follows from 
the properties of $\psi_e$,  integration by parts, and the Cauchy-Schwartz inequality that
 \begin{eqnarray*}
    \|j_{\tau,e}\|_{0,e}^2 
   &\lesssim &
   	\int_e \big \llbracket \nabla u_h \cdot \btau_e  \big \rrbracket 
 	 \, j_{\tau,e} \psi_e \,ds %\\[2mm] &=&
 	= -\sum_{K \in \triangle_e} \int_{\partial K}( \nabla E \cdot \btau )
 	 \,  j_{\tau,e} \psi_e \,ds\\[1ex]
 &=&\sum_{K \in \triangle_e}
   	\int_K \nabla E \cdot \grad^\perp \,  (j_{\tau,e} \psi_e )\,dx 
 	\lesssim  \sum_{K \in \triangle_e}  \alpha_K^{-1/2}
 	\tri E\tri_{K} \,| j_{\tau,e}   \psi_e |_{1,K} \\[2mm]
 &\lesssim & \sum_{K \in \triangle_e}  \alpha_K^{-1/2}
 	\tri E\tri_{K} \, h_e^{-1/2} \|j_{\tau,e}\|_{0,e}
 	\lesssim \left( \alpha^-_e h_e \right)^{-1/2} 
 	\tri E \tri_{\triangle_e} \|j_{\tau,e}\|_{0,e},
 \end{eqnarray*}
which, together with (\ref{jump to tangential jump}) and a similar bound for $e \in \cE_D^h \cup \cE_N^h$, yields
\begin{equation} \label{value jump}
 	\|j_{u,e}\|_{0,e}
 	\lesssim \left( \frac{h_e}{\alpha_e^-} \right)^{1/2} 
 	\tri E \tri_{\triangle_e} , \quad
	\forall \, e \in \cE^{h}.
\end{equation}

Now, the efficiency bound in (\ref{eff}) is a direct consequence of the bounds in 
(\ref{element residual}),
(\ref{flux jump}), and (\ref{value jump}).
This completes the proof of the theorem.
\end{proof}

Next, we establish the local efficiency bound for the modified indicator $\tilde{\eta}_K$. 
To this end, let $K\in\cT^h$ be an element having at least one vertex $z\in \cN_M$.
From the element $K\in \omega^h_z$ to the element $K_z\in \hat{\omega}^h_z\subset \omega^h_z$,
where $K_z$ is chosen in the definition of the operator $I_{h/2}$ in Section 3, there are 
at most two possible paths (clockwise or counter-clockwise) in $\omega^h_z$. 
Denote by $\hat{\omega}_{K,z}^{h/2}$ the union of elements of $\omega_z^{h/2}$
on one of the paths such that the maximum of the ratio between $\alpha_K$ and 
the diffusion coefficients over elements  
on that path is the smallest. 
Let
\[
 	C_{K,z} \equiv \max_{T\in \hat{\omega}^{h/2}_{K,z}}\, \dfrac{\alpha_K}{\alpha_{T}}.
 \] 

\begin{remark}
If $\omega_z^h$ is quasi-monotone, then $C_{K,z}=  1$ for all $K\in \omega^h_z$.
\end{remark}

\begin{lemma} \label{effi:tildeju}
Let $u_h$ be the solution of $(\ref{non-conforming})$, and let
$K\in\cT^h$ be an element having at least one vertex $z\in \cN_M$
and let $T_{K,z}=K \cap \omega_z^{h/2}$. Let $I_{h/2}$ be the interpolation 
operator defined in {\em Section \ref{sec 3}} with $K_z\in \hat{\omega}_{K,z}^{h/2}$ 
described above. Then we have
 \begin{equation} \label{effi:1}
 	\dfrac{\alpha_K}{h_K}\,
 	\|I_{h/2} u_h- u_h\|^2_{0,\partial T_{K,z}}
  	\le 2 \,C_{K,z}  \sum_{e\in \cE_z^{h/2} }  
  	\dfrac{\alpha_e^-}{h_e}\,\big \| \llbracket u_h \rrbracket \big\|^2_{0,e} .
 \end{equation}
\end{lemma}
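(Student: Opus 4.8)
The plan is to bound the left-hand side by expressing the quantity $I_{h/2}u_h - u_h$ along $\partial T_{K,z}$ in terms of the solution jumps $\llbracket u_h \rrbracket_e$ accumulated along the path from $K$ to $K_z$ in the patch $\omega_z^{h/2}$. The key observation is that $I_{h/2}u_h$ and $u_h$ agree at every edge midpoint $m_e$ (by rule (ii) of the definition of $I_{h/2}$), so on each sub-edge of $T_{K,z}$ the difference is an affine function vanishing at the midpoint; hence its $L^2$ norm is controlled entirely by its value at the vertex $z$, namely $\big|(I_{h/2}u_h)(z) - u_h|_K(z)\big|$. Thus the whole left-hand side reduces to estimating this single vertex discrepancy.

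Next I would track how the vertex value propagates. By rule (iii), $(I_{h/2}u_h)(z) = u_h|_{K_z}(z)$, so the discrepancy is $u_h|_{K_z}(z) - u_h|_K(z)$. Walking along the chosen path $\hat{\omega}_{K,z}^{h/2}$ from $K$ to $K_z$, this difference telescopes into a sum of the jumps of $u_h$ at the vertex $z$ across each intervening edge $e \in \cE_z^{h/2}$. Each such jump at $z$ is, up to the midpoint-continuity structure of the nonconforming element, comparable to $h_e^{-1/2}\|\llbracket u_h\rrbracket\|_{0,e}$ (again because the jump is affine and vanishes at $m_e$, so its pointwise value at $z$ is bounded by its $L^2$ norm over $e$ divided by $\sqrt{h_e}$). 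Summing along the path and using Cauchy-Schwarz gives a bound of the form $\sum_{e} h_e^{-1}\|\llbracket u_h\rrbracket\|_{0,e}^2$ over the path edges.

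The weighting by the diffusion coefficient is where the constant $C_{K,z}$ enters, and this is the step I expect to be the crux. To match the left-hand side weight $\alpha_K/h_K$ against the right-hand side weight $\alpha_e^-/h_e$, I would multiply and divide by $\alpha_e^-$ inside each term of the telescoping sum, producing a factor $\alpha_K/\alpha_e^-$. Along the chosen path this ratio is bounded by $C_{K,z} = \max_{T \in \hat{\omega}_{K,z}^{h/2}} \alpha_K/\alpha_T$ by its very definition, since $\alpha_e^-$ is the smaller of the two coefficients meeting at $e$ and the path was selected precisely to minimize the worst such ratio. The factor of $2$ in (\ref{effi:1}) should absorb the two incident sub-edges of each edge $e$ in the refined mesh together with shape-regularity constants.

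The main obstacle will be handling the geometry of the path cleanly: making the telescoping argument rigorous requires carefully accounting for the fact that $z$ lies on the coarse mesh $\cT^h$ while the jumps live on edges of the refined mesh $\cT^{h/2}$, and that the path in $\omega_z^{h/2}$ may traverse interior sub-edges created by the refinement (across which $u_h$ is continuous, contributing nothing) as well as the original edges of $\cT^h$ (across which the genuine jumps occur). I would organize the argument by separating these two edge types, noting that only the coarse-mesh edges carry nonzero jumps, and that the summation in (\ref{effi:1}) over $\cE_z^{h/2}$ already accommodates this distinction. Shape regularity guarantees the number of elements on any path is uniformly bounded, so no factor depending on the mesh enters the constant hidden in the estimate.
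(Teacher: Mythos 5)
Your proposal is correct and follows essentially the same route as the paper's proof: reduce the left-hand side to the single vertex discrepancy $u_h|_{K_z}(z)-u_h|_K(z)$ via the midpoint agreement of $I_{h/2}u_h$ and $u_h$, telescope that difference along the chosen path, convert each vertex jump into the $L^2$ norm of the edge jump (linear, vanishing at the midpoint), and bound the ratios $\alpha_K/\alpha_e^-$ by $C_{K,z}$. One small simplification you could make: every edge of $\cE_z^{h/2}$ is half of a coarse edge of $\cE_z^h$ (the refinement creates no new edges incident to $z$), so your case distinction between coarse edges and refinement-created sub-edges along the path is unnecessary.
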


\begin{proof}
Without loss of generality, we prove (\ref{effi:1}) only when 
$z\in \cN_I^h$. In the case when $z \in \cN_D^h \cup \cN_N^h$,  (\ref{effi:1})  may be proved
in a similar fashion.
To this end, assume that there are $k$ elements in $\hat{\omega}_{K,z}^{h/2}$
and denote these elements by $T_i$ ($i=1,\,...,\,k$) starting from $T_1= T_{K,z}$ along the path and ending at $T_k=K_z\cap \omega_z^{h/2}$.
Denote by $u_i$ and $\alpha_i$ the restrictions of $u_h(z)$ and $\alpha$ on $T_i$, respectively,
and by $e_i$ the edge between $T_{i-1}$ and $T_i$.
A direct calculation gives that
\[
	\dfrac{\alpha_K}{h_K} \|I_{h/2} u_h - u_h\|_{0,\partial T_{K,z}}^2 
	=\dfrac{2}{3} \alpha_1 (u_k-u_1)^2
\]
and that
\[
	\dfrac{\alpha_{e_i}^-}{h_{e_i}}\|\llbracket{u_h}\rrbracket\|_{0,e_i}^2
	=\frac{1}{3} \alpha_{e_i}^-\,\, \big|u_i-u_{i-1}\big|^2,
	 \quad  i=2,\, ...\, ,k.
\]
Together with the triangle inequality, we have
\begin{eqnarray*}
&&\dfrac{\alpha_K}{h_K} \|I_{h/2} u_h - u_h\|_{0,\partial T_{K,z}}^2
	=\dfrac{2}{3}
	\alpha_1 (u_k-u_1)^2 
	\le \dfrac{2}{3} \sum_{i=2}^{k} \dfrac{\alpha_1}{\alpha_{e_i}^-}\alpha_{e_i}^- (u_i-u_{i-1})^2
	\nonumber\\[2mm]
&\le& 2\,C_{K,z}
	 \sum_{i=2}^{k} 
 	\dfrac{\alpha_{e_i}^-}{h_{e_i}}\|\llbracket{u_h}\rrbracket\|_{0,e_i}^2
       \le 2\,C_{K,z}
	\sum_{e \in \cE_z^{h/2} \cap \cE_I^{h/2}}
	 \dfrac{\alpha_{e}^-}{h_{e}}\|\llbracket{u_h}\rrbracket\|_{0,e}^2. 
\end{eqnarray*}
This completes the proof of (\ref{effi:1}) and, hence, the lemma.
\end{proof}

\begin{theorem} {\em (Local Efficiency)}
Assuming that $u \in H^{1+\epsilon}(\O)$ and $u_h$ are the solutions of $(\ref{garlerkin})$ and 
$(\ref{non-conforming})$, respectively.
Then the modified indicator $\tilde\eta_K$ %residual-based a posteriori error estimator 
satisfies the following local efficiency bound: 
 \begin{equation}\label{effi:2}
	 \tilde \eta_K \lesssim  \max_{z\in \cN_K^h} C_{K,z}\,\, 
	\tri E \tri_{\triangle_K},\quad  \forall \;  K\in \cT^h.
 \end{equation} 
\end{theorem}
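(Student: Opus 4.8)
The plan is to argue by cases according to whether the element $K$ meets the non--quasi-monotone vertex set $\cN_M$. If $\cN_K^h\cap\cN_M=\emptyset$, then by the definition (\ref{tilde-etaK}) we have $\tilde\eta_K=\eta_K$, and (\ref{effi:2}) follows at once from the efficiency bound (\ref{eff}) for the standard indicator together with $C_{K,z}\ge 1$ (indeed $C_{K,z}=1$ whenever $\omega_z^h$ is quasi-monotone). The content of the theorem is therefore the case $\cN_K^h\cap\cN_M\neq\emptyset$, in which $\tilde\eta_K^2=\eta_{R_f,K}^2+\eta_{J_\sigma,K}^2+\tilde\eta_{J_u,K}^2$; I would estimate the three groups of terms separately, recycling the previous theorem for the first two and invoking Lemma \ref{effi:tildeju} for the genuinely new one.

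The element-residual and flux-jump indicators are untouched by the modification, so I would simply quote the bounds (\ref{element residual}) and (\ref{flux jump}) established in the proof of the previous theorem: they give $\eta_{R_f,K}\lesssim\tri E\tri_{K}$ and $\eta_{J_\sigma,K}\lesssim\tri E\tri_{\triangle_K}$ with constants independent of the jumps of $\alpha$, under the same regularity $u\in H^{1+\epsilon}(\O)$ that makes the edge fluxes meaningful. The remaining work is to bound $\tilde\eta_{J_u,K}$, whose definition (see Section \ref{sec 3}) splits into a sum over the good vertices $z\in\cN_K^h\setminus\cN_M$ and a sum over the bad vertices $z\in\cN_K^h\cap\cN_M$.

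For a good vertex $z$ each summand is an ordinary solution-jump term $\frac{\alpha_e^-}{4h_e}\|j_{u,e}\|_{0,e}^2$ (or its Dirichlet analogue) on a refined edge $e\in\cE_z^{h/2}\cap\cE_K^{h/2}$. Every such $e$ is one half of an edge $e'\in\cE_K^h$ of $K$; since $u_h$ is piecewise linear, $j_{u,e}$ is the restriction of $j_{u,e'}$, and halving only replaces $h_{e'}$ by $h_{e'}/2$ while leaving $\alpha_{e'}^-$ unchanged, so up to a fixed factor the summand is bounded by $\frac{\alpha_{e'}^-}{h_{e'}}\|j_{u,e'}\|_{0,e'}^2$, which the solution-jump efficiency bound (\ref{value jump}) controls by $\tri E\tri_{\triangle_{e'}}^2$. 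For a bad vertex $z$ the relevant term is exactly $\frac{\alpha_K}{2h_K}\|I_{h/2}u_h-u_h\|_{0,\partial T_{K,z}}^2$, which is what Lemma \ref{effi:tildeju} estimates: by (\ref{effi:1}) it is at most $C_{K,z}\sum_{e\in\cE_z^{h/2}}\frac{\alpha_e^-}{h_e}\|\llbracket u_h\rrbracket\|_{0,e}^2$, and applying (\ref{value jump}) to each summand (again absorbing the halving into the constant) turns this into $C_{K,z}\sum_{e}\tri E\tri_{\triangle_e}^2$. Collecting the good- and bad-vertex contributions and pulling out the worst constant gives $\tilde\eta_{J_u,K}^2\lesssim(\max_{z\in\cN_K^h}C_{K,z})\,\tri E\tri_{\mathcal W}^2$ over a local neighborhood $\mathcal W$ of $K$; since $C_{K,z}\ge 1$, the square root delivers (\ref{effi:2}), and in fact one gets the sharper power $(\max_z C_{K,z})^{1/2}$.

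The only real subtlety is one of locality rather than of estimation. The path built in Lemma \ref{effi:tildeju} travels from $T_{K,z}$ all the way to the maximal-coefficient element $K_z$ across the entire vertex patch $\omega_z^h$, so the refined edges $e\in\cE_z^{h/2}$, and hence their neighborhoods $\triangle_e$, can involve elements of $\omega_z^h$ that are not edge-neighbors of $K$. Consequently the honest right-hand region $\mathcal W$ is the union of vertex patches $\bigcup_{z\in\cN_K^h}\omega_z^h$, which for a high-valence vertex is strictly larger than the edge-neighbor patch $\triangle_K$. I would therefore either enlarge the reference region in (\ref{effi:2}) to this fixed neighborhood of $K$ or read $\triangle_K$ as such; what matters---that the region is local with shape-regularity-bounded overlap and that the constant is independent of the diffusion jumps---is unchanged, and checking the bounded-overlap count so that summing the $\triangle_e$ costs only a shape-regularity factor is the one remaining routine verification.
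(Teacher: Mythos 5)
Your proof is correct and takes essentially the same route as the paper, whose entire proof of this theorem is the one-line remark that (\ref{effi:2}) is a direct consequence of (\ref{eff}) and (\ref{effi:1}); you merely spell out what that means (case split on $\cN_K^h\cap\cN_M$, reuse of (\ref{element residual})--(\ref{flux jump}), halved edges, and applying (\ref{value jump}) to the path edges), and you even note the sharper constant $(\max_z C_{K,z})^{1/2}$ implicit in the argument. Your locality caveat --- that the path in Lemma \ref{effi:tildeju} makes the natural right-hand region the union of vertex patches $\bigcup_{z\in\cN_K^h}\omega_z^h$ rather than the edge-neighbor patch $\triangle_K$ --- is a fair point that the paper glosses over, but it affects only the (still shape-regularity-bounded) size of the local region, not the robustness conclusion.
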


\begin{proof}
(\ref{effi:2}) is a direct consequence of (\ref{eff}) and (\ref{effi:1}).%, and (\ref{value jump}).
\end{proof}

\section{Global Reliability Bound}
\setcounter{equation}{0}

 Let 
 \[
	 \hat{\eta}_{J_u, K}= \left(\frac{\alpha_K}{h_K} \right)^{1/2}\,\| I_{h/2} u_h -u_h\|_{\partial K}
 	\quad\mbox{and}\quad
 	\hat{\eta}_{J_u}
	 =\left(\sum \limits_{K\in \cT^h} \frac{\alpha_K}{h_K}\, \| I_{h/2} u_h -u_h\|_{\partial K}^2
	 \right)^{1/2}. 
 \]

\begin{lemma}\label{lemma3}
Let $u_h$ be the solution of $(\ref{non-conforming})$ and $I_{h/2}$ be the interpolation operator defined in 
{\em Section~\ref{sec 3}}, then the jump of the numerical solution 
has the following upper bound:
 \begin{equation}\label{bound1}
 	\sum_{e \in \cE_I^h \cup \cE_D^h} 
 	 \int_e \{ \alpha \nabla E \cdot \bn_e\} \, j_{u,e} \,ds 
 	\lesssim  \hat{\eta}_{J_u}
 	 \, \tri E\tri_{\O}.
\end{equation}
\end{lemma}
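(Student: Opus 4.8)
The plan is to test the averaged flux of the error against the conforming companion $v:=u_h-I_{h/2}u_h$ and to split the resulting quantity into a volume pairing plus a consistency defect that is controlled by Galerkin orthogonality. Write $\Sigma$ for the left-hand side of (\ref{bound1}). Since $I_{h/2}u_h$ is continuous and, by its definition, reproduces $u_h$ at every edge midpoint while equalling the piecewise linear datum $g_{_D}$ at the Dirichlet nodes, the companion satisfies $\llbracket v\rrbracket_e=j_{u,e}$ for $e\in\cE_I^h$, $\,v|_e=j_{u,e}$ for $e\in\cE_D^h$, and $v(m_e)=0$ for every $e$; the last property gives the crucial mean-value relations $\int_e j_{u,e}\,ds=0$ on $\cE_I^h$ and $\int_e v\,ds=0$ on $\cE_D^h$.

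First I would establish the identity $\Sigma=a_h(E,v)+\mathcal R(v)$ with $\mathcal R(v):=a_h(u_h,v)-f(v)$. Integrating by parts element by element and using the continuity of the exact normal flux $\alpha\nabla u\cdot\bn_e$ across the interfaces together with (\ref{jump product}), one rewrites $\Sigma$ as $a_h(u,v)-f(v)$; subtracting and adding $a_h(u_h,v)$ produces the split. The heart of the proof is then to show that the defect $\mathcal R(v)$ is a \emph{pure load term}. Let $\hat v\in\cU^{nc}_{0,D}$ be the nonconforming interpolant of $v$ determined by the edge averages $\hat v(m_e)=\fint_e v\,ds$; this is well defined precisely because $\int_e\llbracket v\rrbracket\,ds=0$ makes the two one-sided averages agree, and $\hat v$ vanishes at the Dirichlet midpoints because $\int_e v\,ds=0$ there. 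Since $\nabla u_h$ is piecewise constant, the element integrals $\int_K\nabla(\cdot)$ occurring in $a_h(u_h,\cdot)$ depend only on the edge averages of the argument, whence $a_h(u_h,v)=a_h(u_h,\hat v)$, and Galerkin orthogonality (\ref{non-conforming}) gives $a_h(u_h,\hat v)=f(\hat v)$. Hence $\mathcal R(v)=f(\hat v)-f(v)=f(\hat v-v)$, and the troublesome flux-jump contribution has disappeared.

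It remains to bound the two pieces by $\hat\eta_{J_u}\tri E\tri_\O$. The second key ingredient is the local equivalence $\tri v\tri_\O\lesssim\hat\eta_{J_u}$: on each $K$ the restriction $v|_K$ is piecewise linear on the four sub-triangles of $\cT^{h/2}$ and vanishes at the three midpoints, so a scaling argument yields $\alpha_K\|\nabla v\|_{0,K}^2\lesssim(\alpha_K/h_K)\|v\|_{0,\partial K}^2=\hat\eta_{J_u,K}^2$. Cauchy--Schwarz then gives $a_h(E,v)\le\tri E\tri_\O\,\tri v\tri_\O\lesssim\hat\eta_{J_u}\tri E\tri_\O$. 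For $\mathcal R(v)=(f,\hat v-v)+(g_{_N},\hat v-v)_{\Gamma_N}$ I would use the efficiency bounds of Section~\ref{sec:etaK}: on each $K$ one has $(f,\hat v-v)_K=(r_K,\hat v-v)_K$, estimated by $\|r_K\|_{0,K}\|\hat v-v\|_{0,K}\lesssim(\alpha_K^{1/2}/h_K)\tri E\tri_K\,h_K^{1/2}\|v\|_{0,\partial K}=\tri E\tri_K\,\hat\eta_{J_u,K}$, using (\ref{element residual}) and the scaled bound $\|\hat v-v\|_{0,K}\lesssim h_K^{1/2}\|v\|_{0,\partial K}$; the Neumann term is handled identically after replacing $g_{_N}$ by $\alpha\nabla u_h\cdot\bn_e-j_{\sigma,e}$ and noting that $\int_e(\hat v-v)\,ds=0$ kills the constant-flux part, leaving only $j_{\sigma,e}$ controlled by (\ref{flux jump}) on the single boundary element. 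Summation then yields $|\mathcal R(v)|\lesssim\hat\eta_{J_u}\tri E\tri_\O$. The step I expect to be most delicate is the construction and use of $\hat v$: it is exactly this Galerkin-orthogonality device that removes the interior flux-jump term, whose naive estimate would carry the large weight $\alpha_e^+$ on the low-coefficient trace and thus fail to be robust in the jump of $\alpha$. The remaining estimates are robust because both the element residual and the boundary flux jump are weighted by the coefficient $\alpha_K$ of the very element on which $v$ is measured.
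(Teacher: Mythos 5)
Your proof is correct, and it follows a genuinely different route from the paper's. The paper works with the same companion function ($w=I_{h/2}u_h-u_h=-v$), but it splits the left-hand side of (\ref{bound1}) into element-boundary flux integrals $I_1=\sum_{K}\int_{\partial K}(\alpha\nabla E\cdot\bn)\,w\,ds$ plus flux-jump terms $I_2=\sum_{e\in\cE_I^h\cup\cE_N^h}\int_e j_{\sigma,e}\{w\}\,ds$; it bounds $I_1$ through the $H^{-1/2}(\partial K)$--$H^{1/2}(\partial K)$ duality pairing together with a trace theorem, an inverse inequality and (\ref{element residual}), and it makes $I_2$ robust by using (\ref{orthogonality}) to replace the average $\{w\}$ by the one-sided trace $w|_e^+$ from the high-coefficient element, so that the weight $\alpha_e^+$ produced by (\ref{flux jump}) lands on the matching side. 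Your decomposition is algebraically the same object (integrating the paper's $I_1$ by parts back into the elements gives exactly $-a_h(E,v)+\sum_K(r_K,v)_K$), but your estimation differs in two respects. First, you avoid negative-order norms altogether, replacing the duality argument by Cauchy--Schwarz plus the finite-dimensional scaling bound $\tri v\tri_\O\lesssim\hat\eta_{J_u}$, which is legitimate since $v|_K$ is continuous piecewise linear on the four sub-triangles of $K$ and vanishes at the three edge midpoints. Second --- and this is the real structural difference --- you dispose of the interior flux-jump contribution exactly, by re-interpolating $v$ into $\cU^{nc}_{0,D}$ (well defined precisely because $\int_e\llbracket v\rrbracket\,ds=0$) and invoking the discrete equation (\ref{non-conforming}); the paper's proof never uses the Galerkin equation, only the midpoint structure of the nonconforming space, whereas you trade the one-sided-trace trick for Galerkin orthogonality. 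Both devices serve the same purpose of preventing a factor $\alpha_e^+/\alpha_e^-$ from entering, and both proofs ultimately rest on the same ingredients: the midpoint reproduction property of $I_{h/2}$ and the robust efficiency bounds (\ref{element residual}), (\ref{flux jump}). The one step you should spell out is the very first identity $\Sigma=a_h(u,v)-f(v)$: besides the continuity of the exact flux it also uses $\int_e\{\alpha\nabla u_h\cdot\bn_e\}\,j_{u,e}\,ds=0$ on interior edges and $\int_e(\alpha\nabla u_h\cdot\bn_e)(u_h-g_{_D})\,ds=0$ on Dirichlet edges (constant numerical flux against zero-mean jumps, exactly as in (\ref{orth2}) of Lemma \ref{lemma1}); with that supplied, every remaining step --- the well-definedness of $\hat v$, the identity $a_h(u_h,v)=a_h(u_h,\hat v)$, and the local bounds --- checks out and is robust in the coefficient jump.
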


\begin{proof}
Since the integral over edge segment $e\in\partial K$ on the left-hand side of inequality (\ref{bound1}) may be only
regarded as the duality pair between $H^{\delta-1/2}(e)$ and 
$H^{1/2-\delta}(e)$ for an arbitrarily small $\delta>0$, 
we are not able to bound this
integral directly. To overcome this difficulty, we express them in terms of integrals along
the boundary of elements. 
To this end, first note that 
 \[
 	\llbracket I_{h/2} u_h\rrbracket_e=0
	 \quad\mbox{and}\quad
	 \llbracket \alpha \nabla u \cdot \bn_e\rrbracket_e=0,
	 \quad\forall\, e \in \cE_I^h.
 \]
By (\ref{jump product}) and the fact that $I_{h/2} u_h=g_{_D}$ on $\Gamma_D$, we have
 \begin{eqnarray*}
  	&& -\sum_{e \in \cE_I^h \cup \cE_D^h}
 		\int_e \{ \alpha \nabla E \cdot \bn_e \}  j_{u,e} \,ds \\[2mm]
 	&=&\sum_{e\in \cE_I^h} \int_e \{ \alpha \nabla E \cdot \bn_e \}\, 
 		\llbracket I_{h/2} u_h- u_h\rrbracket \,ds
  		+\sum_{e \in \cE_D^h } \int_{e} (\alpha \nabla E \cdot \bn_e) \, (g_{_D}-u_h) \,ds\\[2mm]
 	&=&\sum_{K\in \cT^h} \int_{\partial K}  
  		 (\alpha \nabla E \cdot \bn) \, ( I_{h/2} u_h- u_h) \,ds
  		- \sum_{e\in \cE_I^h} \int_e \llbracket \alpha \nabla E \cdot \bn_e
   		\rrbracket\, \{ I_{h/2} u_h- u_h\} \,ds\\[2mm]
	&& -\sum_{e\in \cE_D^h \cup \cE_N^h} 
  		\int_e  (\alpha \nabla E \cdot \bn_e) \,  ( I_{h/2} u_h- u_h) \,ds
		+\sum_{e \in \cE_D^h } \int_{e} (\alpha\nabla E \cdot \bn_e) \, (g_{_D}-u_h)\,ds\\[2mm]
 	&=&\sum_{K\in \cT^h} \int_{\partial K}  
   		(\alpha \nabla E \cdot \bn) \, ( I_{h/2} u_h- u_h) \,ds
		+ \sum_{e\in \cE_I^h \cup \cE_N^h} \int_e j_{\sigma,e} \{ I_{h/2} u_h- u_h\} \,ds\\[2mm]
 	&\triangleq& I_1+ I_2.
\end{eqnarray*}

The $I_1$ may be bounded above by using the definition of the dual norm, the trace theorem (see, e.g., \cite{CaYeZh:2011}), the inverse inequality, and $(\ref{element residual})$  
as follows:
 \begin{eqnarray}\nonumber
  I_1	
  &\le&
  	\sum_{K \in \cT_{h}}\big \|  \alpha\nabla E \cdot \bn
  	\big\|_{-1/2, \partial K} \big\| I_{h/2} u_h- u_h\big\|_{1/2,\partial K}\\[2mm] 
  &\lesssim& \sum_{K\in \cT_{h}}  \alpha_K^{-1/2}
  	\bigg( \|\alpha \nabla E\|_{0,K}+h_K \, \| r_K\|_{0,K} \bigg)
	 \,\hat{\eta}_{J_u, K} \nonumber\\[2mm]
  & \lesssim & \hat{\eta}_{J_u}\, \tri E\tri_{\O}.\label{6.2}
 \end{eqnarray}
To bound the $I_2$, first note that 
 \[
 	\int_{e } j_{\sigma,e}\, \llbracket I_{h/2} u_h- u_h \rrbracket \,ds=0, 
 	\quad \forall \, e\in \cE_I^h,
\]
which is a consequence of the orthogonality property in $(\ref{orthogonality})$
and the facts that $j_{\sigma,e}$ is a constant and that 
$\llbracket I_{h/2} u_h \rrbracket_e=0$ for all $e \in \cE_I^h$. Hence, 
\begin{eqnarray*} \label{6.1}
	\int_e j_{\sigma,e} \{ I_{h/2} u_h- u_h\} \,ds
	&=& \int_e j_{\sigma,e} \{ I_{h/2} u_h- u_h\} \,ds
		+\frac{1}{2}\int_e j_{\sigma,e} \llbracket I_{h/2} u_h- u_h \rrbracket \,ds\nonumber 
 		\\ [2mm]
	&=&\int_e j_{\sigma,e}  (I_{h/2} u_h- u_h|_e^+ )\,ds, \quad \forall e \in \cE_I^h.
\end{eqnarray*}
Now, it follows from the Cauchy-Schwartz inequality and (\ref{flux jump}) that
\begin{eqnarray}\nonumber
 I_2
	&=& \sum_{e\in \cE_I^h\cup  \cE_N^h} \int_e j_{\sigma,e}  (I_{h/2} u_h- u_h|_e^+ )\,ds
 		\\[2mm]\nonumber
	&\le&\left( \sum_{e \in \cE_I^h \cup \cE_N^h} \frac{\alpha^+_e}{h_e}\,
     		\|I_{h/2} u_h -u_h|_e^+ \|_{0,e}^2\right)^{1/2}
		\left(\sum_{e \in \cE_I^h \cup \cE_N^h}  \frac{h_e}{\alpha^+_e}\,
  		\|j_{\sigma,e} \|_{0,e}^2\right)^{1/2}\\[2mm] 
 	&\lesssim &\left (\sum_{e \in \cE_I^h \cup \cE_N^h}  
 		\frac{\alpha^+_e}{h_e}\|I_{h/2} u_h -u_h|_e^+ \|_{0,e}^2 \right)^{1/2} \tri E\tri_{\O}
 		\nonumber\\[2mm]
 	&\lesssim&  \hat{\eta}_{J_u}\,\tri E\tri_{\O}.\label{6.3}
\end{eqnarray}	
 (\ref{bound1}) is then a consequence of (\ref{6.2}) and (\ref{6.3}). This completes the proof of the lemma.
\end{proof}

Denote by $\tilde{\eta}_{J_u} =\left(\sum\limits_{K\in\cT^h} \tilde{\eta}_{J_u,K}^2\right)^{1/2}$ 
the part of the modified estimator associated with the solution jump. Then
\[
	 \tilde \eta_{J_u}^2=\sum_{ z \in \cN_K^h \setminus \cN_M} 
 	 \sum_{e \in \cE_z^{h/2} } \frac{\alpha^{-}_e}{2 \,h_e}\| j_{u,e}\|_{0,e}^2 +
	 \sum_{z \in \cN_K^h \cap \cN_M} \sum_{T \in \omega_z^{h/2}} 
	 	\frac{\alpha_T}{4h_T} \big \|I_{h/2} u_h -u_h \big \|^2_{0, \partial T} .
\]

\begin{lemma}\label{lemma4}
 The $\hat \eta_{J_u} $ is bounded above by the $\tilde{\eta}_{J_u}$; i.e., 
\begin{equation}\label{6.4}
	\hat \eta_{J_u} \lesssim \tilde \eta_{J_u}.
\end{equation}
\end{lemma}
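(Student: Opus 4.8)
The plan is to prove (\ref{6.4}) by bounding $\hat\eta_{J_u}^2$ termwise after reorganizing the element‑boundary sum into a vertex‑based sum whose two groups of terms match the two groups in $\tilde\eta_{J_u}^2$. The starting point is a purely geometric decomposition of $\partial K$. For $K\in\cT^h$ and a vertex $z\in\cN_K^h$, the corner sub‑triangle $T_{K,z}=\omega_z^{h/2}\cap K$ contributes two half‑edges to $\partial K$, while its third (interior) edge joins two edge midpoints of $K$. First I would observe that $I_{h/2}u_h-u_h$ vanishes on each such interior edge: both $u_h|_K$ and $(I_{h/2}u_h)|_{T_{K,z}}$ are affine there and agree at the two midpoints by property (ii) of $I_{h/2}$, hence coincide on the whole segment. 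Therefore $\|I_{h/2}u_h-u_h\|_{0,\partial T_{K,z}}=\|I_{h/2}u_h-u_h\|_{0,\partial K\cap\partial T_{K,z}}$, and since the six half‑edges partition $\partial K$,
\[
\|I_{h/2}u_h-u_h\|_{0,\partial K}^2=\sum_{z\in\cN_K^h}\|I_{h/2}u_h-u_h\|_{0,\partial T_{K,z}}^2 .
\]

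Next I would interchange the order of summation. Since $z$ is an original vertex, the map $K\mapsto T_{K,z}$ is a bijection from $\omega_z^h$ onto $\omega_z^{h/2}$, so
\[
\hat\eta_{J_u}^2=\sum_{K\in\cT^h}\frac{\alpha_K}{h_K}\|I_{h/2}u_h-u_h\|_{0,\partial K}^2
=\sum_{z\in\cN^h}\ \sum_{K\in\omega_z^h}\frac{\alpha_K}{h_K}\|I_{h/2}u_h-u_h\|_{0,\partial T_{K,z}}^2 ,
\]
and I would split the outer sum according to whether $z\in\cN_M$ or $z\in\cN^h\setminus\cN_M$, so as to reproduce the second and first groups of terms in $\tilde\eta_{J_u}^2$, respectively. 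Boundary vertices are handled exactly as interior ones, in the spirit of the remark opening the proof of Lemma~\ref{effi:tildeju}.

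For $z\in\cN_M$ the bound is immediate: $T_{K,z}\subset K$ gives $\alpha_{T_{K,z}}=\alpha_K$, and shape regularity makes $h_K$ and $h_{T_{K,z}}$ comparable, so each inner summand is controlled, up to an absolute constant, by $\tfrac{\alpha_{T_{K,z}}}{4h_{T_{K,z}}}\|I_{h/2}u_h-u_h\|_{0,\partial T_{K,z}}^2$; the bijection then turns $\sum_{K\in\omega_z^h}$ into $\sum_{T\in\omega_z^{h/2}}$, which is precisely the $\cN_M$‑group in $\tilde\eta_{J_u}^2$. For $z\in\cN^h\setminus\cN_M$ the patch $\omega_z^h$ is quasi‑monotone, so $C_{K,z}=1$ by the remark following the definition of $C_{K,z}$, and (\ref{effi:1}) applied to the Section~\ref{sec 3} interpolant yields
\[
\frac{\alpha_K}{h_K}\|I_{h/2}u_h-u_h\|_{0,\partial T_{K,z}}^2\le 2\sum_{e\in\cE_z^{h/2}}\frac{\alpha_e^-}{h_e}\big\|\llbracket u_h\rrbracket\big\|_{0,e}^2
\]
for each $K\in\omega_z^h$. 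Summing over the uniformly bounded number of elements of $\omega_z^h$ and recalling $\|\llbracket u_h\rrbracket\|_{0,e}=\|j_{u,e}\|_{0,e}$ reproduces the non‑$\cN_M$ group in $\tilde\eta_{J_u}^2$ up to a constant. Adding the two vertex classes gives $\hat\eta_{J_u}^2\lesssim\tilde\eta_{J_u}^2$, and taking square roots proves (\ref{6.4}).

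The remaining calculations are routine once the bookkeeping is fixed; the step that needs genuine care is the first one, namely establishing the clean boundary decomposition (vanishing of $I_{h/2}u_h-u_h$ on the interior refinement edges together with the half‑edge partition of $\partial K$) and the bijection $K\leftrightarrow T_{K,z}$, and then checking that Lemma~\ref{effi:tildeju} may be invoked with the Section~\ref{sec 3} interpolant and the \emph{uniform} constant $C_{K,z}=1$ exactly because $z\notin\cN_M$. This is the precise point at which the quasi‑monotonicity splitting built into the definition of $\tilde\eta_{J_u}$ is exploited to keep the potentially non‑uniform factor $C_{K,z}$ out of the final bound.
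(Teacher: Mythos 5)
Your proposal is correct and follows essentially the same route as the paper's proof: the paper likewise uses the vanishing of $I_{h/2}u_h-u_h$ on the midpoint-to-midpoint edges (the boundary edges of the patches $\omega_z^{h/2}$) to rewrite $\hat\eta_{J_u}^2$ as a vertex-patch sum $\sum_{z}\sum_{T\in\omega_z^{h/2}}\frac{\alpha_T}{2h_T}\|I_{h/2}u_h-u_h\|_{0,\partial T}^2$, keeps the $z\in\cN_M$ terms since they already appear (up to a factor $2$) in $\tilde\eta_{J_u}^2$, and controls the $z\in\cN^h\setminus\cN_M$ terms by the edge-jump terms via the path argument of Lemma~\ref{effi:tildeju} with $C_{K,z}=1$. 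Your write-up simply makes explicit what the paper leaves terse — the affine-agreement argument for the vanishing on interior refinement edges, the bijection $K\leftrightarrow T_{K,z}$, and the point that for $z\notin\cN_M$ the bound (\ref{effi:1}) must be re-derived ``in a similar fashion'' rather than cited verbatim, since Lemma~\ref{effi:tildeju} is stated under the hypothesis $z\in\cN_M$.
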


\begin{proof}
Since $u_h-I_{h/2} u_h$ vanishes on all boundary edges of $w_z^{h/2}$ for all $z \in \cN^{h}$, we have
 \[
	\hat \eta_{J_u}=
	\sum_{K\in \cT^{h}}  \frac{\alpha_K}{h_K}  \| I_{h/2} u_h- u_h\|_{0,\partial K}^2
 	=\sum_{z \in \cN^{h}} \sum_{T\in w_z^{h/2}}  
	 \frac{\alpha_T}{2h_T}  \| I_{h/2} u_h- u_h\|_{0,\partial T}^2.
\]
To prove the validity of (\ref{6.4}), it suffices to show that
for all $z \in  \cN^h \setminus \cN_M$,
$$
	\frac{\alpha_T}{h_T}  \| I_{h/2} u_h- u_h\|_{0,\partial T}^2
	\lesssim 
	\sum_{e \in \cE_z^{h/2} } \dfrac{\alpha_-^e}{h_e} 
	\|\llbracket u_h \rrbracket\|_{0,e}^2, \quad \forall \,\, T \in \omega_z^{h/2}.
$$  
This may be proved in a similar fashion as (\ref{effi:1}) with the fact that $C_{K,z}=1$
for all $z \in  \cN^h \setminus \cN_M$.
This completes the proof of the lemma.
\end{proof}

\begin{theorem}{\em (Global Reliability)} \label{global reliability}
Let $u$ and $u_h$ be the solutions of $(\ref{garlerkin})$ and $(\ref{non-conforming})$, respectively. Then the estimator $\tilde \eta$ satisfies the following global reliability bound: 
\begin{equation} \label{global bound}
\tri E\tri_{\O}  
\lesssim \tilde \eta.
\end{equation}
\end{theorem}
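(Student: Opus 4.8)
The plan is to start from the $L^2$ representation (\ref{error}) of Lemma~\ref{lemma1}, specialised to a convenient interpolant, and to estimate its three groups of terms independently. For $E_h$ I would take the modified Cl\'ement interpolant $\cI_h E$ of Lemma~\ref{lemma2}, adjusted on the Dirichlet edges so that $E_h\in\cU_{0,D}^{nc}$; since $E=u-u_h$ vanishes at every Dirichlet mid-point $m_e$, this adjustment is controlled by the same scaling arguments used in Lemma~\ref{lemma2} and leaves (\ref{clement1})--(\ref{clement2}) intact. With $a_h(E,E)=\tri E\tri_\O^2$, the right-hand side of (\ref{error}) splits into an element-residual part $T_1=\sum_K(r_K,E-E_h)_K$, a flux-jump part $T_2=-\sum_e\int_e j_{\sigma,e}\,\{E-E_h\}\,ds$, and a solution-jump part $T_3=-\sum_e\int_e\{\alpha\nabla E\cdot\bn_e\}\,j_{u,e}\,ds$. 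The aim is to bound $|T_i|\lesssim(\text{indicator})\,\tri E\tri_\O$ with constants independent of the jumps of $\alpha$, and then to divide by $\tri E\tri_\O$.

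The solution-jump part $T_3$ does not involve $E_h$ and is precisely the quantity already controlled: Lemma~\ref{lemma3}, i.e.\ (\ref{bound1}), gives $|T_3|\lesssim\hat\eta_{J_u}\tri E\tri_\O$, and Lemma~\ref{lemma4}, i.e.\ (\ref{6.4}), then yields $|T_3|\lesssim\tilde\eta_{J_u}\tri E\tri_\O$. This is the single place where the absence of quasi-monotonicity would normally be felt, and it has been discharged in advance through the conforming interpolation $I_{h/2}$ and the modified indicator $\tilde\eta_{J_u}$; inside the theorem this step is pure assembly.

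For the flux-jump part $T_2$ I would exploit the nonconforming structure to collapse the average onto a single side. Since $j_{\sigma,e}$ is constant on each $e$, and since $\int_e\llbracket E-E_h\rrbracket\,ds=0$ by the orthogonality (\ref{orthogonality}) (using $\llbracket E\rrbracket_e=-j_{u,e}$ for $e\in\cE_I^h$ together with $\int_e j_{u,e}\,ds=0$), the identity $\{w\}=w|_e^+-\tfrac12\llbracket w\rrbracket$ and $\int_e E_h|_e^+\,ds=h_e\,\pi_e E$ give
\[
	\int_e j_{\sigma,e}\,\{E-E_h\}\,ds=j_{\sigma,e}\int_e\bigl(E|_e^+-\pi_e E\bigr)\,ds .
\]
Only the one-sided mean deviation over $K_e^+$ survives, so Cauchy--Schwarz and (\ref{clement2}) bound this by $\lesssim\|j_{\sigma,e}\|_{0,e}\,(h_e/\alpha_e^+)^{1/2}\tri E\tri_{K_e^+}$, whose $\alpha_e^+$-weight matches that of $\eta_{J_\sigma,K}$. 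Summing over edges and applying Cauchy--Schwarz then yields $|T_2|\lesssim\eta_{J_\sigma}\tri E\tri_\O$, where $\eta_{J_\sigma}=(\sum_K\eta_{J_\sigma,K}^2)^{1/2}$. It is this one-sided reduction, rather than any quasi-monotonicity, that makes the flux-jump bound robust.

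The element-residual part $T_1$ is handled by Cauchy--Schwarz and (\ref{clement1}): writing $\eta_{R_f}=(\sum_K\eta_{R_f,K}^2)^{1/2}$ and noting $h_K\alpha_K^{-1/2}\|r_K\|_{0,K}=\eta_{R_f,K}$,
\[
	|T_1|\lesssim\sum_{K\in\cT^h}\eta_{R_f,K}\Bigl(\tri E\tri_{\triangle_K}+\sum_{e\in\cE_K^h}(\alpha_e^-/h_e)^{1/2}\|j_{u,e}\|_{0,e}\Bigr).
\]
The genuinely nonconforming term here is the extra jump contribution in (\ref{clement1}); I would absorb it by invoking the already-established local efficiency (\ref{value jump}), namely $(\alpha_e^-/h_e)^{1/2}\|j_{u,e}\|_{0,e}\lesssim\tri E\tri_{\triangle_e}$, which converts it back into $\tri E\tri$ over a fixed patch. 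Finite overlap of the patches and a final Cauchy--Schwarz give $|T_1|\lesssim\eta_{R_f}\tri E\tri_\O$. Collecting the three estimates, $\tri E\tri_\O^2\lesssim(\eta_{R_f}+\eta_{J_\sigma}+\tilde\eta_{J_u})\tri E\tri_\O\le C\,\tilde\eta\,\tri E\tri_\O$, since each of $\eta_{R_f}$, $\eta_{J_\sigma}$, $\tilde\eta_{J_u}$ is dominated by $\tilde\eta$; dividing by $\tri E\tri_\O$ proves (\ref{global bound}). The conceptual obstacle lives entirely in $T_3$ and has been resolved by Lemmas~\ref{lemma3}--\ref{lemma4}; the remaining delicate points in the theorem are the orthogonality-based one-sided reduction in $T_2$ and the efficiency-based absorption of the nonconforming jump term in $T_1$.
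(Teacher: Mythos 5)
Your proposal is correct and follows essentially the same route as the paper's own proof: the modified Cl\'ement interpolant in the $L^2$ error representation, the absorption of the nonconforming jump term in the residual part via (\ref{value jump}), the orthogonality-based one-sided reduction of the flux-jump term combined with (\ref{clement2}), and Lemmas \ref{lemma3}--\ref{lemma4} for the solution-jump term. Your explicit adjustment of $\cI_h E$ on Dirichlet edges so that $E_h\in\cU_{0,D}^{nc}$ is a small point the paper leaves implicit, but it does not change the argument.
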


\begin{proof}
Let $\cI_h$ be the modified Cl\'ement interpolation operator defined in Section~4.
Then (\ref{error}) with $E_h=\cI_h E$ becomes
 \begin{eqnarray} \nonumber%\label{error2}
 a_h(E,E)
 &=&\sum_{K \in \cT^h} (r_K, E-\cI_h E)_K
 	-\sum_{e\in \cE_I^h \cup \cE_N^h } \int_e j_{\sigma,e} \, \{E-\cI_h E\}ds \\[2mm] \label{error2}
	&&\quad-\sum_{e\in \cE_I^h \cup \cE_D^h} 
 	\int_e \{ \alpha \nabla E \cdot \bn_e\}\, j_{u,e} ds\nonumber\\[2mm]
	&\triangleq& I_1+ I_2 + I_3.
\end{eqnarray}
The first term in (\ref{error2}) may be bounded by  the Cauchy-Schwartz inequality, Lemma \ref{lemma2}, and (\ref{value jump}) as follows:
 \begin{eqnarray} \label{global1}
 I_1
  	&\le &\sum_{K \in \cT^h} \| r_K\|_{0,K} \, \|E-\cI_h E\|_{0,K} \nonumber\\[2mm]
	&\lesssim&\sum_{K \in \cT^h}  \eta_{R_f,K}\left(\tri E \tri_{0,\triangle_K} 
		 +\sum_{e\in\cE_K^h} \left(\dfrac{\a_e^-}{h_e}\right)^{1/2}
  		 \|\llbracket u_h\rrbracket \|_{0,e} \right)\nonumber\\[2mm]
 	&\lesssim &\left( \sum_{K \in \cT^h} \eta_{R_f,K}^2\right )^{1/2} 
 \!\!\!\tri E\tri_{\O}. 
\end{eqnarray}
To bound the second term in (\ref{error2}), first notice that 
 \[
	 \llbracket E-\cI_h E\rrbracket_e
 	=-\llbracket u_h+\cI_h E\rrbracket_e, 
	\quad \forall e \in \cE_I^h.
 \]
Since $u_h+\cI_h E \in \cU^{nc}$ and the the fact that $j_{\sigma,e}$ is a constant for all $e \in \cE^{h}$ , (\ref{orthogonality}) yields 
 \[
 	\int_e j_{\sigma,e}\, \llbracket E-E_h\rrbracket \,ds=0, 
	\quad \forall e \in \cE_I^h.
 \]
Hence, 
 \begin{equation} \label{global1.1}
 	\int_e  \{E-I_h E\}_e\,ds +\frac{1}{2}\int_e  
   	\llbracket E-I_h E\rrbracket_e\,ds 
	=\!\!\!\! \int_e(E-I_h E)|_e^+ \,ds
	=\!\!\!\!\int_e(E|_e^+-\pi_eE) \,ds,% \,\,\, \forall\, e \in \cE_I^h.
\end{equation}
for all $e \in \cE_I^h$.
The last equality comes from the property of the nonconforming nodal basis functions:
$\fint_{e_i} \phi_{e_j}=\delta_{ij}.$
It then follows from (\ref{global1.1}), the Cauchy-Schwartz inequality, and Lemma \ref{lemma2} that
\begin{eqnarray} \label{global2}
 	I_2 
 	&=&\sum_{e\in \cE_I^h \cup \cE_N^h } \int_{e} j_{\sigma,e}\, (E|_e^+-\pi_e E) ds 
		 \le \sum_{e\in \cE_I^h \cup \cE_N^h } \| j_{\sigma,e} \|_{0,e} \, 
		 \big \|E|_e^+-\pi_{e} E \big\|_{0,e} \nonumber\\[2mm]
 	&\lesssim& \sum_{e\in \cE_I^h \cup \cE_N^h} 
 		\left(\frac{h_e}{\alpha^+_e}\right)^{1/2}  
 		 \| j_{\sigma,e}\|_{0,e} \, \tri E\tri_{0,K_e^+} %\nonumber\\ [2mm]
		 \lesssim  \left(\sum_{K \in \cT^h}  \eta_{J_{\sigma},K}^2\right)^{1/2} \tri E\tri_{\O}.
\end{eqnarray}
%The third term in (\ref{error2}) may be bounded 
%By Lemmas \ref{lemma3} and \ref{lemma4}, the third term in (\ref{error2}) has the following bound:
%\begin{equation}\label{global3}
%	I_3 \le \tilde{\eta}_{J_u} \tri E\tri_{\Omega}.
%\end{equation}
Now, (\ref{global bound}) is a direct consequence of (\ref{error2}), (\ref{global1}),
(\ref{global2}), and Lemmas \ref{lemma3} and \ref{lemma4}. 
This completes the proof of the theorem.
\end{proof}

\section{Numerical Experiments}
\setcounter{equation}{0}

In this section, we report some numerical results for an interface
problem with intersecting interfaces used by many authors (see, e.g.,
\cite{Kel:74, CaYeZh:2011}), which is considered as a
benchmark test problem. Let $\O=(-1,1)^2$ and
 \[
 u(r,\theta)=r^{\beta}\mu(\theta)
 \]
in the polar coordinates at the origin with
$$
	\mu(\theta)=\left\{
	\begin{array}{lll}
	\cos((\pi/2-\sigma)\beta)\cdot\cos((\theta-\pi/2+\rho)\beta) &
	\mbox{if} & 0\leq \theta \leq \pi/2,\\[1ex]
	\cos(\rho\beta)\cdot\cos((\theta-\pi+\sigma)\beta) &
	\mbox{if} & \pi/2 \leq \theta \leq \pi,\\[1ex]
	\cos(\sigma\beta)\cdot\cos((\theta-\pi-\rho)\beta) &
	\mbox{if} & \pi\leq \theta \leq 3 \pi/2,\\[1ex]
	\cos((\pi/2-\rho)\beta)\cdot\cos((\theta-3\pi/2-\sigma)\beta) &
	\mbox{if} & 3\pi/2\leq \theta \leq 2\pi,
	\end{array}
\right.
$$
where $\sigma$ and $\rho$ are numbers. The function $u(r,\theta)$
satisfies the interface problem in (\ref{problem}) with
$\Gamma_N=\emptyset$, $f=0$, and
 \[
 	\a(x)=\left\{\begin{array}{ll}
 	R & \quad\mbox{in }\, (0,1)^2\cup (-1,0)^2,\\[2mm]
 	1 & \quad\mbox{in }\,\O\setminus ([0,1]^2\cup [-1, 0]^2).
	 \end{array}\right.
 \]
The numbers $\beta$, $R$, $\sigma$, and $\rho$ satisfy some
nonlinear relations. 
For example,
when $\beta=0.1$, then
 \[
 	R\approx 161.4476387975881, \quad \rho=\pi/4,
 	\quad\mbox{and}\quad
	 \sigma \approx -14.92256510455152.
 \]
Note that when $\beta=0.1$, this is a difficult problem for
computation.
\begin{remark}
This problem does not satisfy {\em Hypothesis 2.7} in {\em
\cite{BeVe:2000}} as the quasi-monotonicity is not satisfied about the origin.\end{remark}

Started with a coarse triangulation, a sequence of meshes is generated by using
a standard adaptive meshing algorithm that adopts the $L^2$ strategy: (i)
mark elements whose indicators are among the first 20 percent of 
the energy norm of the total error, and 
(ii) refine the marked triangles by bisection. The stopping criteria 
$$ \mbox{rel-err}:=\frac{\tri u-u_u\tri_\O}{\tri u\tri_\O} \le tol$$
is used, and numerical results with $tol=0.1$ are reported.
\begin{figure}[hb]
        \centering
    \begin{minipage}[htb]{0.48\linewidth}
        \includegraphics[trim=20 15 0 10,clip=true,width=1\textwidth,angle=0]{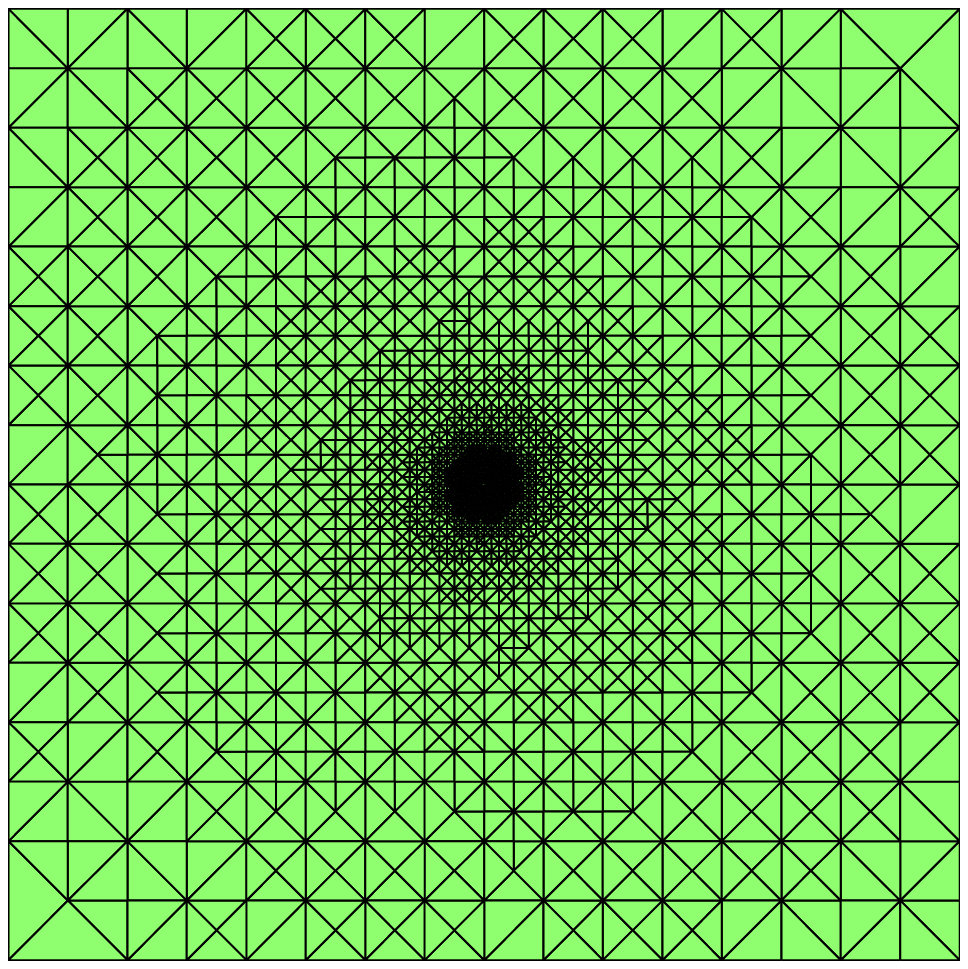}
        \vspace{-0.7cm}
        \caption{mesh generated by $\eta_K$.}
        \end{minipage}%
    \hspace{0.02\linewidth}
     \begin{minipage}[htb]{0.48\linewidth}
        \includegraphics[trim=20 15 0 10,clip=true,width=1\textwidth,angle=0]{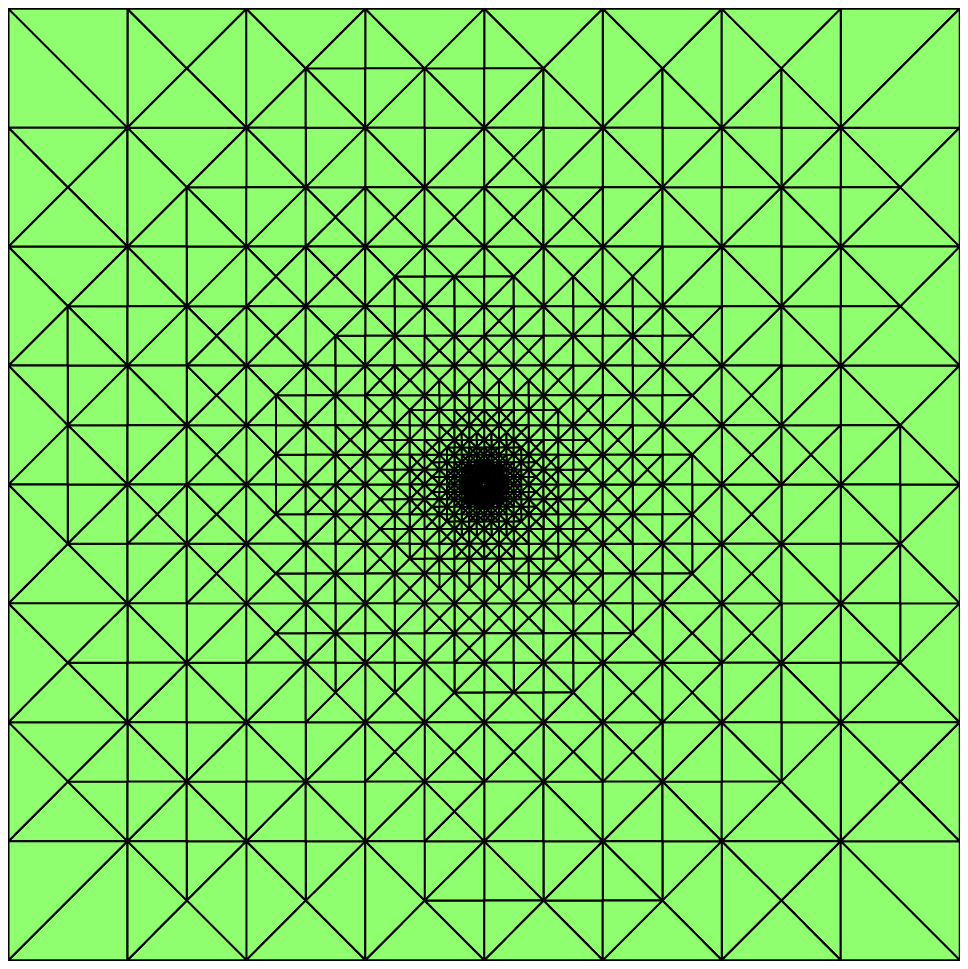}
        \vspace{-0.7cm}
        \caption{mesh generated by $\tilde \eta_K$.}
        \end{minipage}%
\end{figure}

\begin{figure}[ht]
        \centering
    \begin{minipage}[htb]{0.48\linewidth}
        \includegraphics[trim=20 15 0 10,clip=true,width=1\textwidth,angle=0]{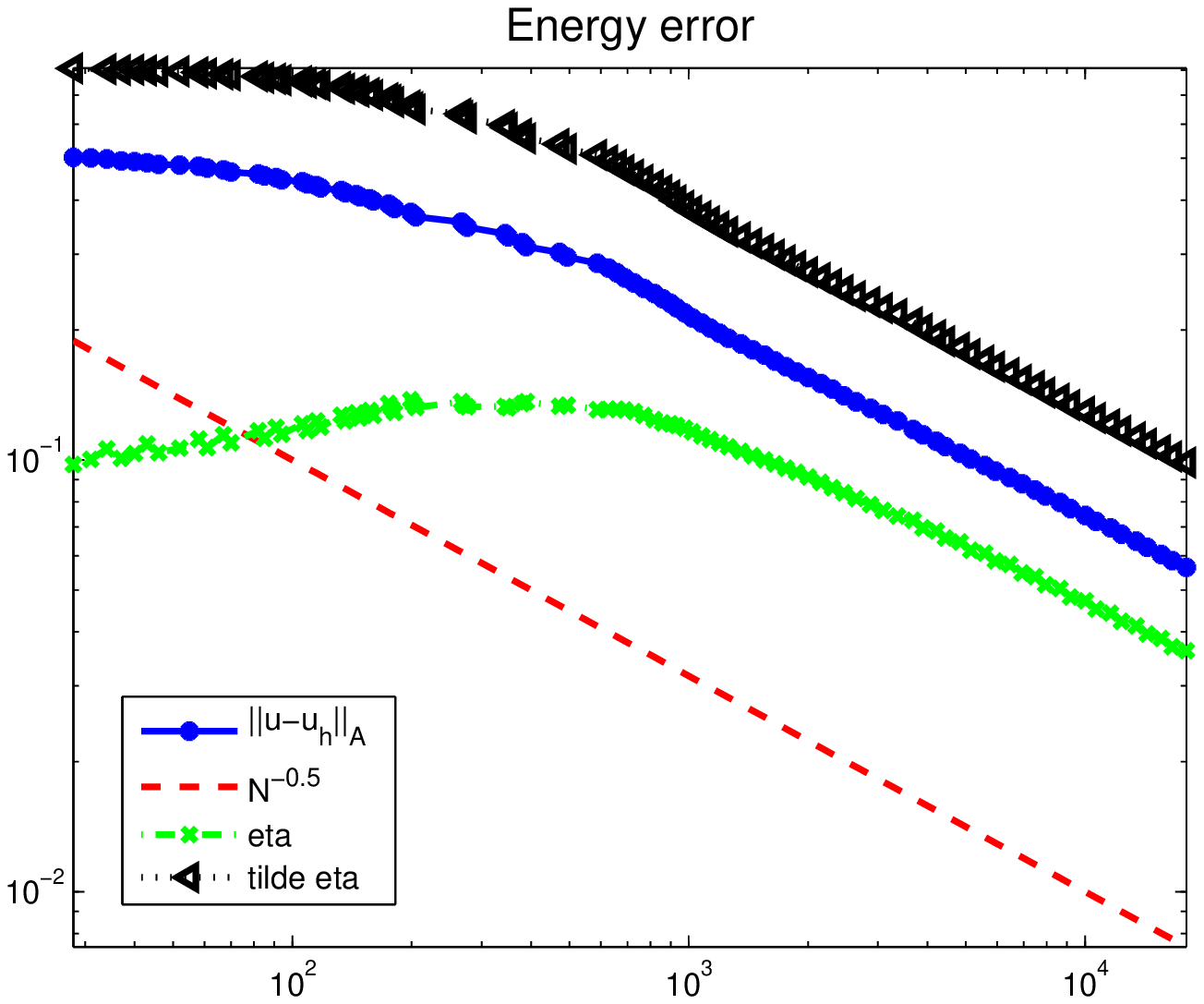}
        \vspace{-0.7cm}
        \caption{error and estimators on mesh generated by $\eta_K$.}
        \end{minipage}%
    \hspace{0.02\linewidth}
     \begin{minipage}[htb]{0.48\linewidth}
        \includegraphics[trim=20 15 0 10,clip=true,width=1\textwidth,angle=0]{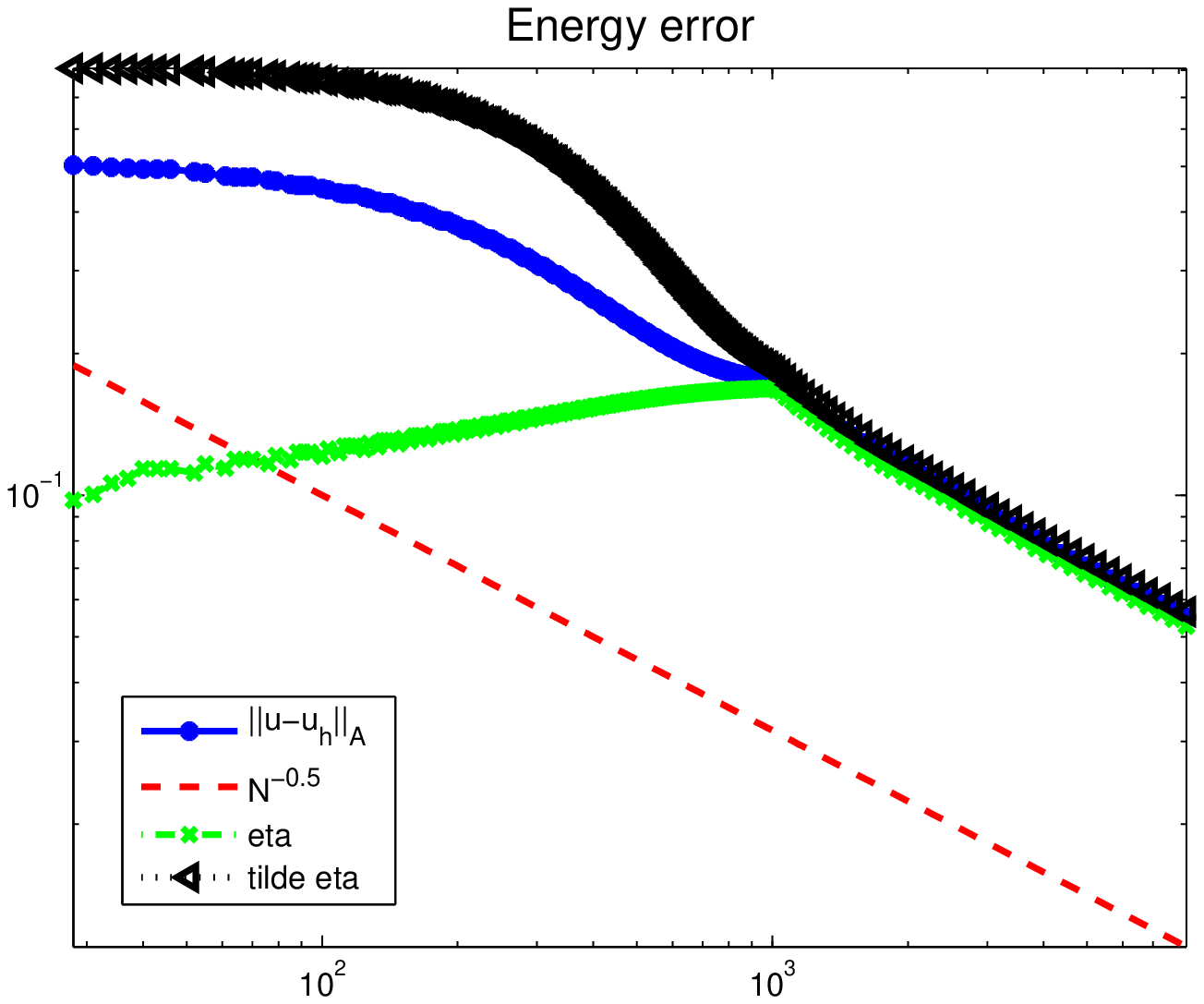}
        \vspace{-0.7cm}
        \caption{error and estimators on mesh generated by $\tilde{\eta}_K$.}
        \end{minipage}%
         %\hfill
\end{figure}

Meshes generated by the standard and modified indicators, $\eta_K$ and $\tilde \eta_K$,
are depicted respectively in Figures $1$ and $2$. Both the refinements are centered 
at the origin. There are 11974 and 5524 elements
in the respective
Figures 1 and 2. Hence,
this test problem suggests that the modified indicator generates a much
better mesh than the standard indicator even though the local efficiency bound
of the modified indicator depends on the jump of the diffusion coefficient.

The comparisons of the true error and the estimators $\eta$ and $\tilde{\eta}$ on the meshes
generated by the standard and modified indicators are shown in Figures 3 and 4, respectively. 
The slope of the log(dof)- log(error) for both the estimators on both the meshes are very close to $-1/2$, which indicates the optimal decay of the error with respect to the number of unknowns.
The efficiency index is defined by
 \[\mbox{eff-index}:=\dfrac{\mbox{estimator}}{\tri u-u_h\tri_\O}.
\]
The efficinecy indices for the $\eta$ and $\tilde{\eta}$
are about $0.6404$ and $1.7469$ on the mesh generated by $\eta_K$
and about $0.9582$ and  $1.0282$ on the mesh generated by $\tilde{\eta}_K$, respectively. 

 \begin{figure}[h]
        \centering
         \begin{minipage}[!hbp]{0.48\linewidth}
        \centering
        \includegraphics[width=0.99\textwidth,angle=0]{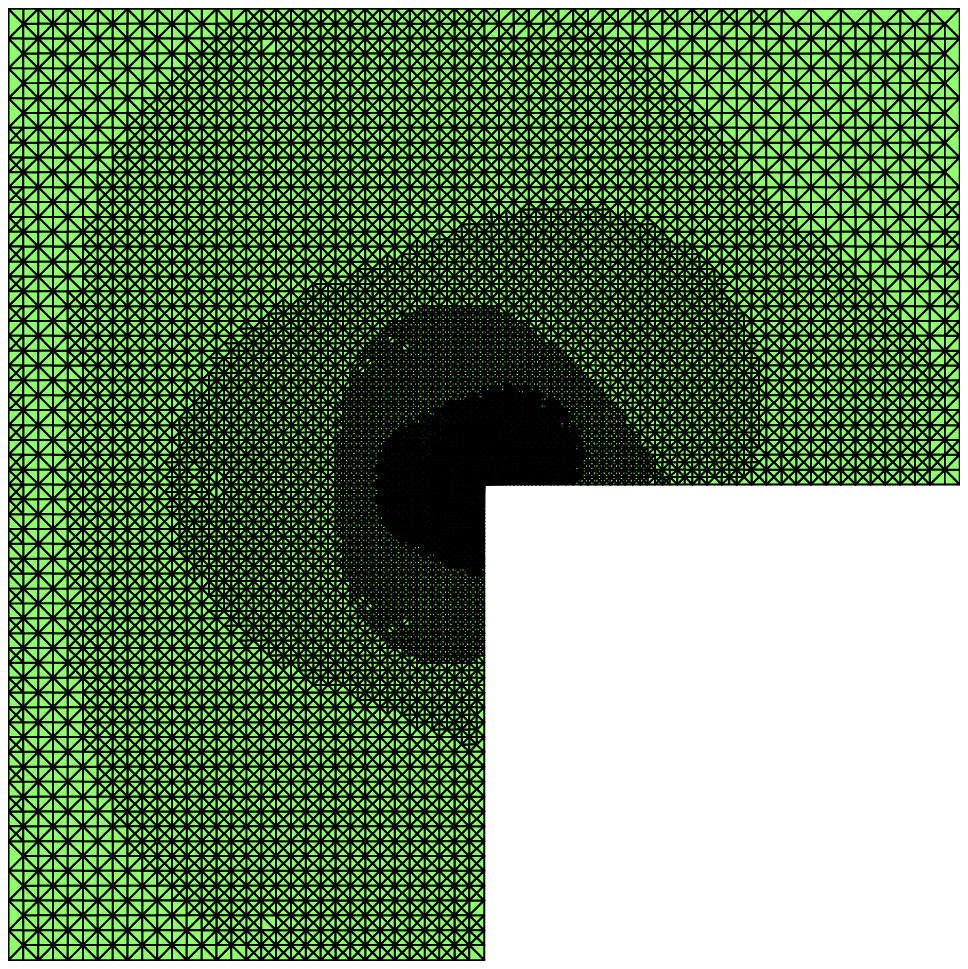}
        \caption{mesh generated by $\eta_K$ with solution jump}%
        \end{minipage}%
        \quad
    \begin{minipage}[!htbp]{0.48\linewidth}
        \centering
        \includegraphics[width=0.99\textwidth,angle=0]{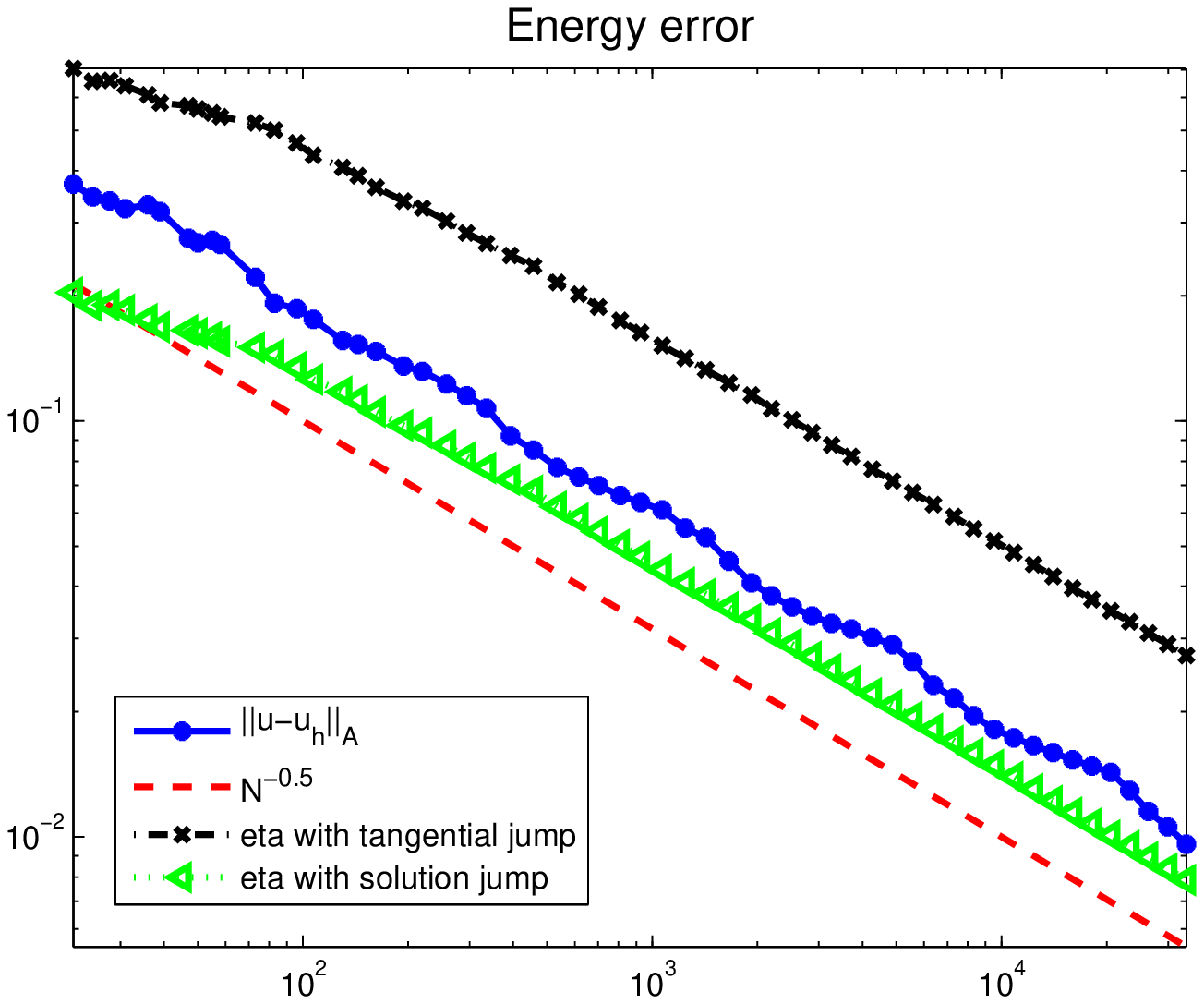}
        \caption{error and estimators on mesh generated by $\eta_K$}%
    \end{minipage}%
        \hfill
\end{figure}
Our numerical results also show that our standard estimator using the edge solution jump 
is more accurate than
the existing estimators using the edge tangential jump. To illustrate this fact, 
we present numerical results for a test problem \cite{EiSiWa:2005}:
a Poisson equation defined on the L-shaped domain $\Omega=(-1,\,1)^2\setminus [0,\,1]\times [-1,\,0]$ with 
the following exact solution 
\[
u(x,\,\theta)=r^{2/3}\sin \left(\dfrac{2\theta+\pi}{3} \right), \quad \theta \in [0, \, 3\pi/2].
\]

The stopping criteria is set as $tol \le 0.0075$. The efficiency indices in the final step 
are  $0.8205$ and $2.8423$ for the respective estimators with the edge solution and tangential
derivative jumps. 
This indicates that our standard estimator is more accurate than the existing 
residual estimator (see Figure 6).

%    Bibliographies can be prepared with BibTeX using amsplain,
%    amsalpha, or (for "historical" overviews) natbib style.
\bibliographystyle{amsplain}
%    Insert the bibliography data here.

\end{document}